\documentclass[leqno,11pt]{amsart}
\usepackage{amssymb}
\newtheorem{theorem}{Theorem}[section]
\newtheorem{proposition}[theorem]{Proposition}
\newtheorem{lemma}[theorem]{Lemma}
\newtheorem{corollary}[theorem]{Corollary}

\usepackage{comment}

\theoremstyle{definition}

\newtheorem{remark}[theorem]{Remark}
\usepackage{hyperref}
\usepackage{ stmaryrd }
\usepackage{color}
\usepackage[shortlabels]{enumitem}

\newcommand{\p}{\partial}

\setcounter{tocdepth}{2}

\newcommand{\Ric}{\mathrm{Ric}}

\newcommand\eea{\end{eqnarray}}
\newcommand\bea{\begin{eqnarray}}

\def\be{\begin{equation}}
\def\ee{\end{equation}}

\title[Mean curvature flow in de Sitter space]{Mean Curvature Flow  in de Sitter space}
\author{Or Hershkovits, Leonardo Senatore}
\begin{document}

\begin{abstract}
We study mean convex mean curvature flow $M_s$ of local spacelike graphs in the flat slicing of de Sitter space. We show that if the initial slice is  of non-negative time  and is graphical over a large enough ball,  and if $M_s$ is of  bounded mean curvature, then as $s$ goes to infinity, $M_s$ becomes graphical  in \textit{expanding} balls, over which the gradient function converges to $1$. In particular, if $p_s$ is the point lying over the center of the domain ball in $M_s$, then $(M_s,p_s)$ converges smoothly  to the flat slicing of de Sitter space. This has some relation to the mean curvature flow approach to the cosmic no hair conjecture.
\end{abstract}

\maketitle

\section{introduction}

The study of evolution of graphs by mean curvature flow (MCF) in Euclidean space, initiated in \cite{EH_entire} and \cite{EH_interior}, was arguably one of the most dramatic early achievements in geometric flows. In particular, the latter paper included the so-called Ecker-Huisken gradient estimate, which was the first instance of ``pseudo-locality'' to be discovered. As every hypersurface is locally a graph, the estimates of \cite{EH_interior} are  useful in the study of the evolution of compact hypersurfaces by MCF as well. Global graphicality, however, is much too stringent an assumption in this setting.  

\bigskip

If a time oriented Lorentzian manifold $(\tilde{\mathcal{N}}^4,g)$ contains a  Cauchy surface $S$ (or equivalently, if $\tilde{\mathcal{N}}$ is globally hyperbolic) then there exists a global time function $t:\tilde{\mathcal{N}}\rightarrow \mathbb{R}$ and a projection map $\pi:\tilde{\mathcal{N}}\rightarrow S$ such that 
\[
(\pi,t):\tilde{\mathcal{N}}\rightarrow S \times \mathbb{R}
\]  
is a diffeomorphism \cite{Geroch, Hawking_Ellis, BS}. Global graphicality is therefore a very natural physical assumption in the Lorentzian setting, even when $S$ is compact. The study of mean curvature flow in this setting, which is called a \textit{cosmological spacetime}, was initiated by Ecker and Huisken \cite{Ecker_Huisken_lor}, where the MCF (and related flows) were used to construct, via parabolic methods, { constant mean curvature} surfaces in such spacetimes, assuming they satisfy the timelike convergence condition, and contain some barriers to the flow.  

\bigskip

While there has been a continued interest in spacelike MCF in the Lorentzian setting (see for instance \cite{Ecker_null, Ecker_mink,Ecker_flat,Smock_evol,Lamb_neu,RS}), the only instance where a local gradient estimate was obtained was for the flat Minkowski space in \cite{Ecker_mink}. 

\bigskip

In the current paper, we are concerned with the implications of being an immortal local in space  mean curvature flow of space-like hypersurfaces in de Sitter space. More concretely, given $R\leq \infty$, consider the manifold 
\begin{equation}
\mathcal{N}_R=\{(x,t)\in \mathbb{R}^3\times \mathbb{R}\;|\; |x| \leq R \}
\end{equation}
endowed with the de Sitter metric 
\begin{equation}\label{ds_metric_intro}
g=e^{2t}(dx_1^2+dx_2^2+dx_3^2)-dt^2.
\end{equation}

\bigskip

We say that a mean curvature flow $(M_s)_{s\in [0,s_0)}$ is graphical in $\mathcal{N}_R$ if the $t$ component of the evolved (spacelike) hypersurfaces $M_s$ can be written as a graph of a function $u:B_R\times [0,s_0)\rightarrow \mathbb{R}$. 

\begin{remark}\label{pseudo_local_nature} 
It will turn out that imposing  a uniform bound on the mean curvature of $M_s$ is natural from the physical standpoint (see Corollary \ref{main_cor} and the discussion surrounding it). We note that contrary to the Riemannian setting, a bound on the mean curvature $H$ does not provide any bound on the time derivative of $u$.  Indeed, as 
\begin{equation}
\frac{d}{ds}u= Hv
\end{equation}
where
\[
v=-g(\partial_t,\nu)\geq 1,
\]
the velocity is huge if $v$ is large, even if $H=1$. 
\end{remark}

\begin{theorem}\label{main_thm}
There exists a universal $R<\infty$ with the following significance: Let  $(M_s)_{s\in [0,\infty)}$   be a graphical mean curvature flow in $\mathcal{N}_R\cap \{t\geq 0\}$ with bounded, non negative mean curvature, and  with graphical function $u(x,s)$. Then
\begin{equation}
{\lim_{\lambda \rightarrow \infty}u(0,\lambda)=\infty,}
\end{equation}
and setting
\begin{equation}
u^{\lambda}(x,s)=u(e^{-u(0,\lambda)}x,s+\lambda)-{{u(0,\lambda)}}
\end{equation}
we have that 
\begin{equation}
u^{\lambda}\xrightarrow{C^{\infty}_{\mathrm{loc}}(\mathbb{R}^3\times \mathbb{R})} 3s,
\end{equation}
as $\lambda\rightarrow \infty$.
\end{theorem}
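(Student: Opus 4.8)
The plan is to exploit the scaling isometry of de Sitter space to view $u^{\lambda}$ as a renormalized copy of the flow, to combine an interior gradient estimate with the resulting curvature estimates to get $C^{\infty}_{\mathrm{loc}}$ compactness of the family $\{u^{\lambda}\}$, and then to identify every subsequential limit with the flat slicing by showing that the gradient function $v=-g(\partial_t,\nu)\geq 1$ is forced down to its minimum value $1$. First I would record that $F_{\mu}(x,t)=(\mu x,\,t-\log\mu)$ is an isometry of de Sitter space that fixes $\partial_t$, hence preserves $v$, the second fundamental form $A$, and all of its covariant derivatives. Applying $F_{\mu_{\lambda}}$ with $\mu_{\lambda}=e^{u(0,\lambda)}$ and translating time by $\lambda$ turns $(M_s)$ into a graphical mean curvature flow over $B_{e^{u(0,\lambda)}R}$ whose graph function is exactly $u^{\lambda}$, with $u^{\lambda}(0,0)=0$ and with the value of $v$ at the point over the origin equal to $v(0,\lambda)$. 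So the theorem reduces to the statement that, once $u(0,\lambda)\to\infty$ (so that the rescaled domains exhaust $\mathbb{R}^3$), the normalized family converges in $C^{\infty}_{\mathrm{loc}}$ to the unique graphical flow with $u(0,0)=0$ and $v\equiv 1$, namely $u=3s$; indeed $v\equiv 1$ forces each time-slice to be a flat slice, which is umbilic with $H=3$, so $\partial_s u=Hv=3$.

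Next I would bring in the interior estimates. I would quote (or prove, adapting Ecker--Huisken to the de Sitter metric) that for a graphical mean convex flow over $B_R$ with bounded $H$ the accelerated expansion of de Sitter upgrades the usual time-restricted interior gradient estimate to a global-in-time one: $v\leq C$ on $B_{R/2}\times[1,\infty)$ with $C$ depending only on the bound on $H$. This is where $R$ must exceed a universal threshold -- the fixed curvature scale of de Sitter has to be small compared with the domain. Standard parabolic bootstrapping then bounds $|A|$ and every $|\nabla^k A|$ on $B_{R/2}\times[2,\infty)$, uniformly in time; transporting these bounds through $F_{\mu_{\lambda}}$ and using that the domains grow gives $\lambda$-independent $C^{\infty}$ bounds for $u^{\lambda}$ on every fixed compact subset of $\mathbb{R}^3\times\mathbb{R}$. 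Hence $\{u^{\lambda}\}$ is precompact in $C^{\infty}_{\mathrm{loc}}$.

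The hard part will be showing that $v\to 1$. Using that in the flat slicing $\mathrm{Hess}(-t)=g+d(-t)\otimes d(-t)$ -- the slices are umbilic with principal curvature $1$ -- together with the sign of the ambient curvature in the normal direction, I would compute the evolution equation for $v$ and argue that, beyond the familiar $-|A|^2 v$ and gradient terms, it carries a restoring zeroth-order term of favorable sign that drives $v$ towards $1$ (in particular $v$ is a subsolution of the heat equation on $M_s$). Together with the interior estimates this should yield $v\to 1$ as $s\to\infty$, locally uniformly in $x$ -- the mean curvature flow version of the cosmic no hair phenomenon. In particular $v(0,\lambda)\to 1$.

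It remains to see that $u(0,\lambda)\to\infty$ and to pin down the limit. Since $\partial_s u=Hv\geq 0$, $u(0,s)$ is nondecreasing; if it stayed bounded, say $u(0,s)\to c_0<\infty$, then the gradient bound $v\le C$ would confine $u$ to a finite interval on some ball $B_{\rho}$ of definite size, and, because $\int_0^\infty (Hv)(0,s)\,ds=c_0-u(0,0)<\infty$, the monotonicity plus the uniform estimates would force the flow on $B_{\rho}$ to converge to a static, hence maximal ($H\equiv 0$), spacelike graph; but $v\to1$ would make that limit a flat slice, which has $H=3$ -- a contradiction. So $u(0,\lambda)\to\infty$. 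Finally, any $C^{\infty}_{\mathrm{loc}}$ subsequential limit $u^{\infty}$ of $u^{\lambda}$ is an entire, eternal, graphical, mean convex flow in de Sitter with $v\geq 1$ and, by the previous step, $v=1$ at the spacetime origin; the strong minimum principle applied to the evolution equation for $v$ then gives $v\equiv 1$, so every time-slice of $M^{\infty}$ is a flat slice, and since such flows satisfy $\partial_s u=3$ with $u^{\infty}(0,0)=0$ we get $u^{\infty}=3s$. As every subsequential limit equals $3s$, precompactness gives $u^{\lambda}\xrightarrow{C^{\infty}_{\mathrm{loc}}}3s$. The genuinely new analytic inputs are the global-in-time interior gradient estimate and the decay $v\to1$; everything else -- the scaling bookkeeping, the compactness, and the concluding rigidity -- should be routine.
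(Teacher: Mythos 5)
Your outline follows the paper's broad architecture (rescale by the isometry $O_a$, prove a global-in-time interior gradient bound $v\leq C$, upgrade it to $v\to 1$, pass to a limit and identify it), but it has a genuine circularity at the step that is actually the crux of the theorem: proving $\lim_{\lambda\to\infty}u(0,\lambda)=\infty$. You derive this from ``$v\to 1$,'' and you derive $v\to 1$ from the restoring term in the evolution of $v$ ``together with the interior estimates.'' But both the interior gradient estimate and the flatness estimate are \emph{localized} statements, and the only cutoff whose evolution under the flow is controlled is $r_\alpha=e^{\alpha t}|x|^2$ (Lemmas \ref{r_evolve_ineq} and \ref{grad_r_size}); its error terms carry factors $e^{(\alpha-2)t}$ and are small only when $t\geq T$ is \emph{large}. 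So the mechanism that drives $v$ to $1$ is only available once one already knows the flow escapes to infinite height --- exactly the statement you are trying to prove with it. In the scenario you want to exclude ($u(0,s)$ bounded), the localization fails and the pointwise inequality $(\partial_s-\Delta)v^2\leq-4|\nabla v|^2-2(v^2-1)$ gives nothing; note also that a static limit with $H\equiv 0$ is not absurd per se (totally geodesic spacelike graphs exist in $\mathcal{N}_\infty$, e.g.\ $t=-\tfrac12\log(1+|x|^2)$), so you really do need the $v\equiv1$ input to reach a contradiction. The paper closes this gap with a separate, quantitative barrier argument (Proposition \ref{barrier_proposition}): the mean curvature flow of the flat disk $\{t=0,\ |x|\leq R_2\}$ with fixed boundary rises without bound at its center, proved by iterating the isometries $O_a$ against the strict mean convexity of the initial disk; sliding this barrier under $M_0$ forces $\min_{M_s\cap\mathcal{N}_{R/2}}t\to\infty$. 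This is also where the universal $R$ comes from, a point your proposal attributes (incorrectly) to the gradient estimate alone. Your proposal contains no substitute for this barrier, and without it the rest of the argument cannot start.

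A secondary gap: in your compactness step you claim that $v\leq C$ plus curvature bounds give $\lambda$-independent $C^\infty$ bounds for $u^\lambda$ on \emph{every} fixed compact subset of $\mathbb{R}^3\times\mathbb{R}$. As Remark \ref{pseudo_local_nature} and \eqref{grad_sq} indicate, the Euclidean derivative of the graph function satisfies $|\tfrac{d}{dy}u^\lambda|\leq\sqrt{v^2-1}\,e^{u^\lambda}$, so a bound $v\leq C$ only controls $u^\lambda$ (and hence yields graphicality and compactness) on a coordinate ball of radius comparable to $1/\sqrt{C^2-1}$; the graph can a priori blow up beyond that. This is why the paper must first prove $v\leq 1+\theta$ on a region of size $\theta^{-1/3}$ (Theorem \ref{flat_est} plus the integration step \eqref{motion_bd}--\eqref{rho_def}) before it can assert control on expanding balls, and why the compactness on all of $\mathbb{R}^3\times\mathbb{R}$ comes only at the very end. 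Reordering your argument so that compactness on large balls precedes the flatness estimate is not merely cosmetic; as written, that step fails.
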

In light of Remark \ref{pseudo_local_nature}, Theorem \ref{main_thm} is of the ``pseudo local'' nature, even though $H$ is assumed to be bounded.  

\bigskip

To put the theorem within a more geometric perspective, observe that for every $a\in \mathbb{R}$, 
\begin{equation}\label{O_a_def}
O_{a}(x,t):=(e^a x,t-a).
\end{equation}
is an isometry of $\mathcal{N}_{\infty}$. Note further that $(\bar{M}_s)_{s\in (-\infty,\infty)}$ defined by
\begin{equation}\label{barms_def}
\bar{M}_s=t^{-1}(3s)
\end{equation}
is a mean curvature flow in $\mathcal{N}_{\infty}$  satisfying  
\begin{equation}
O_{3a}\left(\bar{M}_s\right)=\bar{M}_{s-a}.
\end{equation}
{Observe further that the} $\bar{M}_s$ form a flat {constant mean curvature umbilical} slicing of {$\mathcal{N}_{\infty}$}.
 
\begin{theorem}[Main theorem - geometric version]\label{main_thm_geom}
There exists a universal $R<\infty$ with the following significance: Let  $(M_s)_{s\in [0,\infty)}$   be a graphical mean curvature flow in $\mathcal{N}_R\cap \{t\geq 0\}$ with bounded, non negative mean curvature, and  with graphical function $u(x,s)$. Then
\begin{equation}
{ \lim_{\lambda \rightarrow \infty}u(0,\lambda)=\infty,}
\end{equation}
and setting $M^{\lambda}_s:=O_{u(0,\lambda)}\left(M_{\lambda+s}\right)$, we have
\begin{equation}
M^{\lambda}_s \xrightarrow{C^{\infty}_{\mathrm{loc}}(\mathbb{R}^3\times \mathbb{R})} \bar{M}_s
\end{equation}
as $\lambda\rightarrow \infty$.
\end{theorem}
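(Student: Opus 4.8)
The two formulations are equivalent: unwinding the definitions, $M^{\lambda}_s=O_{u(0,\lambda)}(M_{\lambda+s})$ is precisely the graph of $u^{\lambda}$ and $\bar M_s$ is the graph of $3s$, so I would prove the analytic statement, Theorem~\ref{main_thm}. Two preliminary facts: since $H\ge 0$, the equation $\p_s u=Hv$ makes $u(x,\cdot)$ nondecreasing, so $u(\cdot,0)\ge 0$ keeps $M_s\subset\{t\ge 0\}$; and a direct computation from $g=e^{2t}\delta-dt^2$ gives $\overline{\Ric}=3g$ and $\mathrm{Hess}^{g}t=-(g+dt\otimes dt)$, hence, along \emph{any} MCF, the height function and the gradient function $v=-g(\p_t,\nu)$ satisfy
\[
(\p_s-\Delta_{M_s})(t|_{M_s})=v^2+2,\qquad (\p_s-\Delta_{M_s})v=-|A|^2v-\tfrac{2}{v}|\nabla v|^2+\mathcal E,
\]
where $\mathcal E$ is a curvature/$H$ term whose structure --- notably the contribution of $\overline{\Ric}(\nu,\nu)=-3$ and the piece $H(1-v^2)\le 0$ coming from mean convexity --- drives everything below. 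I would also record the companion $(\p_s-\Delta_{M_s})$ formulas for $|x|^2$ and for $|A|^2$; in the spacelike Lorentzian setting the dangerous $+|A|^4$ reaction term of the Riemannian case carries a favorable sign, as in Ecker's Minkowski estimate \cite{Ecker_mink}.

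\emph{Step 1 (local gradient estimate; the main obstacle).} I would prove that there is a universal $R<\infty$ such that for every graphical mean convex MCF in $\mathcal N_R\cap\{t\ge 0\}$ with $0\le H\le H_0$, the gradient function $v$ is bounded on $B_{3R/4}\times[1,\infty)$, and moreover tends to $1$ on \emph{expanding} balls: for each $\rho<\infty$ and $\eps>0$ there is $\Lambda$ with $\sup_{B_\rho}(v^{\lambda}(\cdot,0)-1)<\eps$ once $\lambda\ge\Lambda$. The plan is the Ecker--Huisken interior-estimate scheme: apply the maximum principle to $\varphi v$ (and to $\varphi(3s-t)$) for a space--time cutoff $\varphi$ built from the ambient functions $t$ and $|x|^2$, whose evolution along the flow is governed by the identities above; mean convexity and the bound on $H$ are used to absorb the error terms, while the genuine de Sitter expansion --- the $v^2+2\ge 3$ forcing --- is what upgrades a bound on $v$ to decay. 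This is the only ingredient with no precedent outside flat Minkowski, and it is the crux; the difficulty is compounded by Remark~\ref{pseudo_local_nature}, since $H\le H_0$ gives no a priori bound on the speed $Hv$, so the cutoffs must be arranged so that the terms they create are controlled without one.

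\emph{Step 2 (regularity and $u(0,\lambda)\to\infty$).} Given the gradient bound, the evolution of $|A|^2$ yields $|A|\le C$ on $B_{R/2}\times[2,\infty)$ and then $|\nabla^{k}A|\le C_k$ on $B_{R/4}\times[3,\infty)$ by interpolation and Schauder; as $O_{u(0,\lambda)}$ is an isometry these bounds transfer to $M^{\lambda}_s$ on any fixed compactum once $\lambda$ is large. For $u(0,\lambda)\to\infty$ I would use that $(\p_s-\Delta_{M_s})(3s-t)=1-v^2\le 0$ makes $3s-t$ a supersolution along the flow, and feed this into the same cutoff localization to get $u(0,s)\ge 3s-C$ for universal $C$; in particular $u(0,\lambda)\to\infty$, so the rescaled domains $B_{\exp(u(0,\lambda))R}$ exhaust $\mathbb R^3$ and $M^{\lambda}$ is defined on $[-\lambda,\infty)$.

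\emph{Step 3 (compactness and rigidity).} By Steps 1--2 the $u^{\lambda}$ satisfy $u^{\lambda}(0,0)=0$ with uniformly bounded gradient function and speed on compacta, hence, together with the curvature bounds, uniform $C^{\infty}_{\mathrm{loc}}$ estimates; extracting a subsequential limit gives an entire eternal mean convex graphical MCF $u^{\infty}$ in $\mathcal N_\infty$ with $u^{\infty}(0,0)=0$ and, by Step 1, $v^{\infty}\equiv 1$. But $v\equiv 1$ forces $\nu=\p_t$, so each $M^{\infty}_s$ is a flat slice $\{t=c(s)\}$, which has mean curvature $3$; the MCF equation then gives $c'(s)=3$, and $c(0)=0$ forces $u^{\infty}\equiv 3s$, i.e.\ $M^{\infty}_s=\bar M_s$. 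Since the limit does not depend on the subsequence, the convergence holds as $\lambda\to\infty$, proving Theorems~\ref{main_thm} and \ref{main_thm_geom}.
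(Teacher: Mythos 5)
Your overall architecture (localized maximum principle for the gradient function $v$ with a cutoff built from $t$ and $|x|^2$, then a flatness estimate, then compactness plus the rigidity ``$v\equiv 1$ forces flat slices of mean curvature $3$, so $f(s)=3s$'') is the paper's architecture, and your rigidity step and your evolution identities (e.g.\ $(\p_s-\Delta)t=v^2+2$, equivalently \eqref{delta_teq}) are essentially correct. But there is a genuine gap in the ordering of Steps 1 and 2, and it is not cosmetic. The cutoffs available here are of the form $r_\alpha=e^{\alpha t}|x|^2$, and the evolution inequality they satisfy (Lemma \ref{r_evolve_ineq}) only closes when $t$ is large: the error terms in $(\p_s-\Delta)r_\alpha$ carry factors $e^{(\alpha-2)t}$ multiplying $v^2$, and these must be made smaller than $\eps v^2$ before the balance of constants in the gradient estimate (Theorem \ref{grad_est}) works. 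This is why every analytic estimate in the paper carries the hypothesis $t\ge T_0$ rather than merely $t\ge 0$, and why the bound is only obtained for $s\ge s_0(M_0,\Lambda)$ rather than on $B_{3R/4}\times[1,\infty)$ as you claim. Your proposed mechanism for height growth --- a localized maximum principle for $3s-t$ using ``the same cutoff localization'' --- is therefore circular: the localization errors are again of size $v^2$ times the cutoff defects, so they require either large $t$ or a gradient bound, both of which you intend to derive afterwards.

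The paper breaks this circle with an ingredient absent from your proposal: the barrier argument of Proposition \ref{barrier_proposition}. There one compares the flow with two \emph{explicit} solutions --- the flat slices $\bar M_s$ from above, and the compact rotationally symmetric disk solution $\sigma_s$ with fixed boundary from below --- and iterates the resulting unit-time height gain $c>0$ using the isometries $O_a$ of \eqref{O_a_def} to conclude $u(0,s)\to\infty$ with no cutoff computation at all. (Note this yields linear growth at some universal rate $c$, not your stronger claim $u(0,s)\ge 3s-C$, which is not needed.) Only after this does the gradient estimate apply, on the translated flows $O_{a}(M_{s_*+s})$ which live in $\{t\ge T_0\}$. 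Two smaller points: the decay of $v$ to $1$ is not driven by the ``$v^2+2$ forcing'' in the height equation but by the reaction term $-2(v^2-1)$ in the second $v$ inequality \eqref{v_eq_for_ambilic}, obtained by completing the square in $4Hv$ and identifying the traceless second fundamental form; and in your compactness step the gradient bound on $E_\rho$ is conditional on the height of the graph staying in the slab $|t|\le\rho$, so you must integrate the gradient bound radially from the origin (as in the second step of the paper's final proof) to verify that condition before passing to the limit.
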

Thus, any immortal graphical mean convex mean curvature flow {$(M_s)_{s\in [0,\infty)}$} with bounded curvature in the future of a de Sitter cylinder $\mathcal{N}_R$ { with \textit{finite} $R$,}  converges\footnote{{Note that applying $O_{u(0,\lambda)}$ on $M_{s+\lambda}$ amounts to ``re-defining'' the $(x,t)$ co-ordinates in such a way that (i) shifts the height to where the spatial origin of the flow had reached, (ii) the de-Sitter metric still has the same form under this new $(x,t)$ co-ordinates,  and (iii) the flow time $s$ is shifted to be centered around $\lambda$. Thus,  $M^{\lambda}_s$ truly describes how the flow looks like, centering at flow time $\lambda$.}} to the ``universal'' mean curvature flow $\bar{M}_s$ on the \textit{full de Sitter {hemi-}space} $\mathcal{N}_{\infty}$.\footnote{Our $\mathcal{N}_\infty$ corresponds to points $(t,y_1,\ldots y_4)$ in the hyperboloid model of de Sitter space having $y_1>0$.}

\bigskip

While the proof method of Theorem \ref{main_thm} is similar to the ones in \cite{EH_interior} and \cite{Ecker_mink}, note that the result is of a somewhat different character. Namely, our result states that local graphicality implies \textit{improved graphicality} on \textit{larger and larger} intrinsic balls in $M_s$, which also become flatter and flatter. In other words, contrary to \cite{EH_interior} and \cite{Ecker_mink}, the interior gradient estimate is one that prevails for all time, and improves with time. While this is not how things are proved, intuitively, one can think that the polynomial deterioration of the bounds in  \cite{EH_interior} and \cite{Ecker_mink} as time progresses are absorbed by the exponential  expansion of the de Sitter metric \eqref{ds_metric_intro}. This indicates some interplay between the wave equation nature of the ambient metric{, the cosmological constant,}  and the heat type equation of the evolved hypersurface.

\bigskip

On a technical level, the differences in the proof of Theorem \ref{main_thm} in comparison with the ones in \cite{EH_interior,Ecker_mink} include (I) a barrier argument, which forces flows as in Theorem \ref{main_thm} to go to larger and larger $t$ (see Proposition \ref{barrier_proposition} ), (II) a flow time independent (but $t$ dependent) cutoff in the gradient estimate (see \eqref{r_def} and Theorem \ref{grad_est}), and (III) the use of the $v$ equation directly, rather than equation for $A$ or its traceless part, in the flatness estimate (See Theorem \ref{flat_est}).

\subsection*{Relation to the cosmic no hair conjecture}\label{cnh_relate}

Let us change the perspective a little bit, in order to discuss how the above result fits into the mean curvature flow program for addressing the cosmic no hair conjecture \cite{KS, CSV, CHSV}. 

Suppose that the $4$ dimensional Lorentzian manifold $(\tilde{\mathcal{N}},\tilde{g})$ forms a cosmological spacetime, that is, there exist a global time function $t:\tilde{\mathcal{N}}\rightarrow \mathbb{R}$ such that $\nabla t \neq 0$ is timelike and such that the $t$ level set hypersurfaces $\{t=t_0\}$ are compact. Assume further that (A) the metric $\tilde{g}$ satisfies the non-vacuum Einstein equation with cosmological constant $3$,
\begin{equation}
\mathrm{Ric}_{\tilde{g}}-\frac{R_{\tilde{g}}}{2}\tilde{g}=-3\tilde{g}+\tilde{T},
\end{equation}
where the stress energy tensor $\tilde{T}$ satisfies the dominant energy condition and the strong energy condition\footnote{Namely, $\tilde{T}(X,X)-\frac{1}{2}\mathrm{tr}\tilde{T}\tilde{g}(X,X)\geq 0$ and $-(\iota_X \tilde{T})^{\#}$ is future directed or null whenever $X$ is future directed }, that (B) the hypersurface $\{t=0\}$ is mean convex w.r.t to the future pointing normal, and that (C) there exist an open set $\Omega \subseteq \{t>0\}$  and a constant $C<\infty$ such that $\partial \Omega$ is smooth, spacelike, and has non positive mean curvature w.r.t the normal pointing into $\Omega$, and such that 
\[
-\tilde{g}(\nabla t,\nabla t) \leq C
\]
on $\{t\geq 0\}-\Omega$.

The following is an immediate application of Theorem \ref{main_thm}, and \cite[Theorem 2, Theorem 3]{CSV} (which itself relies one  \cite[Theorem 4.1]{Ecker_Huisken_lor}).
\begin{corollary}\label{main_cor}
There exists a universal $R<\infty$ with the following significance. Suppose that $\tilde{\mathcal{N}}$ satisfies (A),(B) and (C) as above, and suppose that  there exists a $t$ preserving Lorentzian embedding $\iota: \mathcal{N}_R\rightarrow \tilde{\mathcal{N}}$. Then the graphical mean curvature flow $M_s$ emanating from $\{t=0\} \subseteq \tilde{\mathcal{N}}$ exists for all time, is mean convex and has bounded mean curvature. In particular $\iota^{-1}(M_s)$ satisfies the conclusion of Theorem \ref{main_thm}.  
\end{corollary}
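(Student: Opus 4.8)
My plan is to obtain the flow in $\tilde{\mathcal N}$ first, read off its global properties from the cited work, and only afterwards cut out the cylinder and invoke Theorem \ref{main_thm}. So I would run the mean curvature flow $(M_s)_{s\ge 0}$ in $\tilde{\mathcal N}$ starting from $M_0=\{t=0\}$. By (B), $\{t=0\}$ is mean convex with respect to the future pointing normal and hence a lower barrier; by (C), $\p\Omega$ is spacelike with non positive mean curvature with respect to the normal pointing into $\Omega$ and hence an upper barrier. Together these confine $M_s$ to $\{t\ge0\}\setminus\Omega$ for all $s$, a region on which, again by (C), the lapse $-\tilde g(\nabla t,\nabla t)$ is bounded by $C$.

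The next step is to translate the energy conditions in (A) into the curvature hypothesis needed by \cite{CSV}: the Einstein equation with cosmological constant $3$ together with the strong energy condition on $\tilde T$ gives $\Ric_{\tilde g}(X,X)\ge 3\,\tilde g(X,X)$ for timelike $X$, that is, the timelike convergence condition shifted by the cosmological constant, while the dominant energy condition supplies the remaining sign hypotheses. With this, and with the two barriers above in hand, I would invoke \cite[Theorem 2, Theorem 3]{CSV} (which rest on \cite[Theorem 4.1]{Ecker_Huisken_lor}) to conclude that $M_s$ exists for all $s\in[0,\infty)$, that $H\ge0$ is preserved by the maximum principle applied to the evolution equation of $H$, and that $\sup_{M_s}H$ is bounded uniformly in $s$. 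The last assertion is the one that really uses the confinement to $\{t\ge0\}\setminus\Omega$: bounded lapse there is what keeps the Ecker--Huisken gradient and curvature estimates uniform in time rather than deteriorating.

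Finally I would restrict to the cylinder. Since $\iota:\mathcal N_R\to\tilde{\mathcal N}$ is a $t$ preserving Lorentzian embedding and each $M_s$ is a compact spacelike hypersurface, $M_s$ is a graph over the Cauchy surface of $\tilde{\mathcal N}$; hence $\iota^{-1}(M_s\cap\iota(\mathcal N_R))$ is the graph of a function $u(\cdot,s):B_R\to[0,\infty)$ --- non negativity because $M_s\subseteq\{t\ge0\}$ --- and it is defined for \emph{all} $s$ since $\iota(B_R)$ is a fixed piece of that Cauchy surface. As $\iota$ is isometric and mean curvature flow is determined by the ambient geometry near the moving surface, $\iota^{-1}(M_s)$ is a graphical mean curvature flow in $\mathcal N_R\cap\{t\ge0\}$ whose mean curvature, being the restriction of that of $M_s$, is non negative and bounded; so Theorem \ref{main_thm} applies verbatim. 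I expect the only genuinely non-trivial point to be the \emph{time independent} mean curvature bound, i.e.\ getting \cite{CSV} to deliver a bound on $\sup_{M_s}H$ that does not grow with $s$, since that bound is exactly the load-bearing hypothesis of Theorem \ref{main_thm}; the fact that the restricted flow stays graphical over the entire ball $B_R$ for all time is, by contrast, automatic --- which is precisely why Theorem \ref{main_thm} was phrased for flows that are merely local in space.
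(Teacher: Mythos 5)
Your proposal is correct and takes essentially the same route as the paper, which treats the corollary as an immediate application of \cite[Theorem 2, Theorem 3]{CSV} (resting on \cite[Theorem 4.1]{Ecker_Huisken_lor}) to get long-time existence, mean convexity and the uniform bound on $H$ for the flow in $\tilde{\mathcal{N}}$, followed by restriction along the isometric, $t$-preserving embedding $\iota$ and an appeal to Theorem \ref{main_thm}. Your elaborations --- the barrier roles of (B) and (C), the translation of (A) into the cosmological-constant-shifted timelike convergence condition, and the automatic graphicality of $\iota^{-1}(M_s)$ over $B_R$ --- are exactly what those citations are being asked to supply.
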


The above corollary is an (easy) test case for the mean curvature flow program for addressing the cosmic no hair conjecture, as we now briefly explain.  The cosmic no hair conjecture  is an informal physics conjecture attributed to Gibbons and Hawking \cite{GibonHawking} and to Hawking and Moss \cite{HawkingMoss}  stating that every initially expending universe with positive cosmological constant should asymptote, away from black holes, to de Sitter space. Assumptions (A),(B) and (C) above form a formalization of such a universe\footnote{See \cite{CHSV} for a discussion regarding the physical justification of these assumptions.}.  In the physics paper \cite{KS} by Kleban and the second named author, it was suggested that mean curvature flow emanating from $t^{-1}(0)$  might serve as a useful tool for probing such ambient manifold $\tilde{\mathcal{N}}$, as it both avoids the potential singular region $\Omega$ and as the area of the spacelike surfaces is non decreasing. In the $(2+1)$ dimensional setting  \cite{CSV} and under surface symmetry assumptions in the $(3+1)$ dimensional case \cite{CHSV}, it was further shown that the mean curvature flow itself provides a slicing of spacetime which implies various metric convergence results to de Sitter space and its standard slicing $\bar{M}_s$ above -providing both formalizations and proofs of the conjecture in these simplified cases. If one can show that such convergence will hold somewhere along the MCF evolution of $t^{-1}(0)$ in $(3+1)$ dimensions \textit{without symmetry assumption}, this will constitute a proof of the no hair conjecture. In particular, as is explained in \cite{CHSV}, this will also provide  further and important theoretical justification for the physical theory of cosmic inflation. 

\bigskip

While there are some global mechanism which might play a role in showing that the mean curvature flow of $t^{-1}(0)$ converges on some expanding region to the standard slicing of de Sitter space, it is reasonable to expect that some local mechanisms will play a role too. Corollary \ref{main_cor} indicates such a mechanism for local (expanding) in space and global in time convergence to the flat slicing, \textit{assuming} the space is locally de Sitter. This is of course very far from what is needed for the cosmic no hair conjecture, but it indicates the { potential usefulness} of such ideas in the general context.
\subsection*{Acknowledgements}
We would like to thank Gerhard Huisken and Ben Lambert for inspiring discussion. Part of this work was carried out during the second author's visit to the Hebrew University in the context of the program surrounding Prof. Huisken distinguished Gordon visiting Professorship at the Hebrew University. OH was partially supported by ISF grant 437/20. LS is partially supported by the SNSF grant $200021\_213120$.

\section{Preliminaries and notation}
Recall that for every $R\leq \infty$, we endow the manifold
\begin{equation}
\mathcal{N}_R=\{(x,t)\in \mathbb{R}^3\times \mathbb{R}\;|\; |x| \leq R \}
\end{equation}
with the de Sitter metric 
\begin{equation}\label{ds_metric}
g=e^{2t}(dx_1^2+dx_2^2+dx_3^2)-dt^2.
\end{equation}
The curvature tensor of de Sitter space is given by
\begin{equation}\label{ds_curvature}
R(X,Y,Z,W)=g(X,W)g(Y,Z)-g(X,Z)g(Y,W)
\end{equation}
We will denote by $\bar{\nabla}$ the covariant derivative in $\mathcal{N}_R$, and by $\p_{i}$ and $\p_t$ the coordinate vector fields.
\begin{lemma}[Covariant derivatives]
We have
\begin{equation}\label{nabla_t_eq}
\bar{\nabla}_X \partial_t= X+g(X,\p_t)\p_t, \qquad \bar{\nabla}_{\partial_i}\partial_i=e^{2t}\partial_t,\qquad \bar{\nabla}_{\partial_t}\partial_i=\bar{\nabla}_{\partial_i}\partial_t=\partial_{i}.
\end{equation}
\end{lemma}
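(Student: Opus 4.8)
The plan is to reduce the lemma to a routine Christoffel-symbol computation, exploiting that the coordinate fields $\p_1,\p_2,\p_3,\p_t$ pairwise commute and that the metric \eqref{ds_metric} is block diagonal with $g_{ij}=e^{2t}\delta_{ij}$, $g_{it}=0$, $g_{tt}=-1$ for $i,j\in\{1,2,3\}$. Since $e^{2t}$ depends on $t$ alone, the only nonvanishing partial derivatives of the metric coefficients are $\p_t g_{ij}=2e^{2t}\delta_{ij}$; this is what makes the formulas come out so clean.

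First I would feed the pairs of coordinate fields into the Koszul formula $2g(\bar{\nabla}_X Y,Z)=Xg(Y,Z)+Yg(X,Z)-Zg(X,Y)$, in which all bracket terms have been dropped because coordinate fields commute. Taking $X=Y=\p_i$, the right-hand side vanishes for $Z=\p_j$ and equals $-\p_t g(\p_i,\p_i)=-2e^{2t}$ for $Z=\p_t$, so $g(\bar{\nabla}_{\p_i}\p_i,\p_j)=0$ for all $j$ and $g(\bar{\nabla}_{\p_i}\p_i,\p_t)=-e^{2t}$, whence $\bar{\nabla}_{\p_i}\p_i=e^{2t}\p_t$. Taking $X=\p_i$, $Y=\p_t$, the only surviving term is $\p_t g(\p_i,\p_j)=2e^{2t}\delta_{ij}$ contributing when $Z=\p_j$, which gives $g(\bar{\nabla}_{\p_i}\p_t,\p_j)=e^{2t}\delta_{ij}$ and $g(\bar{\nabla}_{\p_i}\p_t,\p_t)=0$, i.e. $\bar{\nabla}_{\p_i}\p_t=\p_i$; since $[\p_i,\p_t]=0$, torsion-freeness of $\bar{\nabla}$ upgrades this to $\bar{\nabla}_{\p_t}\p_i=\bar{\nabla}_{\p_i}\p_t=\p_i$. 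Taking $X=Y=\p_t$ every term on the right vanishes (using $\p_t g_{tt}=0$), so $\bar{\nabla}_{\p_t}\p_t=0$; this last identity is not part of the statement but is needed for the first one.

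For the first identity, I would expand an arbitrary vector field $X=X^i\p_i+X^t\p_t$ and use linearity of $\bar{\nabla}$ in the subscript together with the three identities just obtained: $\bar{\nabla}_X\p_t=X^i\bar{\nabla}_{\p_i}\p_t+X^t\bar{\nabla}_{\p_t}\p_t=X^i\p_i$. On the other side, $g(X,\p_t)=X^t g_{tt}=-X^t$, so $X+g(X,\p_t)\p_t=(X^i\p_i+X^t\p_t)-X^t\p_t=X^i\p_i$, and the two expressions agree.

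I do not expect a genuine obstacle here: this is elementary once the metric coefficients are written down. The only point requiring care is the Lorentzian signature --- the identities $g^{tt}=-1$ and $g(X,\p_t)=-X^t$ --- since a sign error there would flip the sign of the $g(X,\p_t)\p_t$ correction in the first formula. An alternative, slightly slicker route is to observe that $\p_t=-\bar{\nabla}t$ is a geodesic unit timelike field, so $\bar{\nabla}_X\p_t=-\mathrm{Hess}(t)(X,\cdot)^{\#}$, and then identify the Hessian of the time coordinate; but the direct coordinate computation above is the shortest.
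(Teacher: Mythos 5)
Your proof is correct, and it is essentially the same computation as the paper's: both reduce to Koszul's formula and the single nonvanishing metric derivative $\p_t g_{ij}=2e^{2t}\delta_{ij}$, the only cosmetic difference being that the paper obtains the first identity via a polarization argument on slice-tangent fields while you assemble it from the coordinate identities by linearity.
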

\begin{proof}
For the first equation, note that as $g(\partial_t,\partial_t)=-1$, for every vector field $X$ we have 
\[
g(\bar{\nabla}_X\partial_t,\partial_t)=0.
\]
Moreover $\bar{\nabla}_{\p_t}\p_t=0$ since for every $p$, $t\mapsto(p,t)$ is a geodesic.  Now, if $Y$ is a vector field which is tangent to the slices $\{t=c\}$, and which is $t$ independent, \eqref{ds_metric} implies
\begin{equation}
g(\bar{\nabla}_Y\p_t,Y)=g(\bar{\nabla}_{\p_t}Y,Y)=\frac{1}{2}\partial_t g(Y,Y)=g(Y,Y).
\end{equation}
By polarization, it follows that $X\mapsto \bar{\nabla}_X\partial_t$ is the projection map to the $\{t=c\}$ slice, which is precisely the first equation. The third equation follows from the first equation and symmetry. The second equation follows from Koszul's formula.
\end{proof}
We can also directly compute { the gradients of the special coordinate functions to be}
\begin{equation}\label{cor_grads}
\bar{\nabla} x_i=e^{-2t}\partial_i, \qquad \bar{\nabla} t=-\partial_t.
\end{equation}
We can also compute the Hessian and the wave operator of the coordinate functions. Recall that for a function $f$ the Hessian is defined, just like in the Riemannian case, as
\[
\overline{\mathrm{Hess}}f(X,Y)=XYf-\bar{\nabla}_XYf
\]
and the wave operator is given by
\[
\bar{\Box} f=g^{kl}\overline{\mathrm{Hess}}f(\p_k,\p_l).
\] 
Therefore, in our de Sitter setting, in light of \eqref{nabla_t_eq},
\begin{equation}\label{cor_hess1}
\overline{\mathrm{Hess}}x_i(\partial_{i},\partial_t)=-1,
\end{equation}
and all the other entries are zero, and 
\begin{equation}\label{cor_hess2}
\overline{\mathrm{Hess}}t(\partial_i,\partial_i)=-e^{2t},
\end{equation}
and all the other entries are zero. In particular
\begin{equation}\label{co-ordinate_wave}
\bar{\Box} x_i=0, \qquad \bar{\Box} t=-3.
\end{equation}

\subsection*{Spacelike graphical hypersurfaces}
Consider a spacelike hypersurface $M$, graphical over the $x_1x_2x_3$ plain, and let $\nu$ be its future pointing unit normal, i.e. the unit normal such that $g(\p_t,\nu)<0$, and consider the \textit{gradient function} $v$, given by  
\begin{equation}
v=-g( \p_t,\nu).
\end{equation}
Observe that $v\geq 1$. Given a point $p\in M$, take a normal frame $\{e_1,e_2,e_3\}$ around $p$, which we assume to be the principal directions. We let $A$ be the scalar second fundamental form, whose components are given by 
\[
h_{ij}=g(\bar{\nabla}_{e_i}\nu,e_j)=-g(\bar{\nabla}_{e_i}e_j,\nu).
\] 
In particular $\bar{\nabla}_{e_i}e_j=h_{ij}\nu$. The mean curvature is then given by
\[
H=h_{ii}.
\]
In light if \eqref{ds_curvature}, the Codazzi equation takes the form 
\[
\bar{\nabla}_{k}h_{ij}=\bar{\nabla}_{i}h_{jk}.
\]

\bigskip

Coming back to the analytic properties of the coordinate functions when restricted to $M$, if we denote by $\nabla$ the gradient operator on $M$, \eqref{cor_grads} implies
\begin{equation}
\nabla x_i=e^{-2t}\partial_i+e^{-2t}g(\partial_i,\nu)\nu,\qquad\nabla t=-\p_t+v\nu.
\end{equation}
In particular
\begin{equation}\label{grad_sq}
|\nabla x_i|^2=e^{-2t}+e^{-4t}g(\nu ,\partial_i)^2,\qquad |\nabla t|^2=v^2-1,
\end{equation}
and 
\begin{equation}\label{cross_term}
g(\nabla x_i, \nabla t)= e^{-2t}vg(\p_i,\nu).
\end{equation}

Finally, observe that for a spacelike hypersurface $M$ and for a smooth function $f:\mathcal{N}_R\rightarrow \mathbb{R}$  we can relate the Laplacian of $f$ w.r.t $M$, $\Delta f$ to the wave operator on $\mathcal{N}_R$ by the formula
\begin{equation}\label{wave_lap}
\Delta f= \bar{\Box} f +H\nu f+ \overline{\mathrm{Hess}}f(\nu,\nu).
\end{equation}
Combining  \eqref{wave_lap}, \eqref{co-ordinate_wave} \eqref{cor_grads} and  \eqref{cor_hess1} we see that
\begin{equation}\label{delta_xieq}
\Delta x_i=He^{-2t}g(\nu,\partial_i)-2e^{-2t}vg(\nu,\partial_{i}),
\end{equation}
and using  \eqref{cor_hess2} we similarly get
\begin{equation}\label{delta_teq}
\Delta t= -3+Hv-(v^2-1).
\end{equation}

\section{evolution equations}
\subsection*{Gradient evolution} Following \cite{Bartnik} and \cite{Ecker_Huisken_lor}, we start by computing the evolution equation for $v$. Contrary to the general Lorentzian situation, but similarly to the case of Minkowski space \cite{Ecker_mink}, the evolution is good enough to obtain ``pseudo local'' estimates for $v$.  

\begin{lemma}[Evolution of $v^2$]
\begin{equation}\label{vsq_eq}
\left(\partial_s-\Delta\right)v^2 =4Hv-2v^4-4v^2-2|A|^2v^2+2A(e_i,\p_t^{\top})^2-4|\nabla v|^2.
\end{equation}
\end{lemma}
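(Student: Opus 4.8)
The plan is to compute $(\p_s-\Delta)v$ and then pass to $v^2$ via the identity $(\p_s-\Delta)v^2=2v(\p_s-\Delta)v-2|\nabla v|^2$. I write the flow as $\p_s F=H\nu$ (consistent with $\frac{d}{ds}u=Hv$) and record the standard fact that $\p_s\nu=\nabla H$ is a tangent vector field on $M_s$, which follows by differentiating $g(\nu,\nu)=-1$ and $g(\nu,e_j)=0$ and using $\p_s e_j=\p_j(H\nu)=(\p_j H)\nu+H\bar\nabla_{e_j}\nu$. I use throughout the orthogonal decomposition $\p_t=\p_t^\top+v\nu$ (so $\p_t^\top=\p_t-v\nu$, since $g(\p_t,\nu)=-v$ and $g(\nu,\nu)=-1$); note $|\p_t^\top|^2=g(\p_t,\p_t)+v^2=v^2-1$, in agreement with \eqref{grad_sq}, and $\nabla t=-\p_t^\top$. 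Differentiating $v=-g(\p_t,\nu)$ along the flow, using $\bar\nabla_{\p_sF}\p_t=H\bar\nabla_\nu\p_t=H\nu-Hv\,\p_t$ from \eqref{nabla_t_eq} together with $\p_s\nu=\nabla H$, then gives
\[
\p_s v=H-Hv^2-\langle\nabla H,\p_t^\top\rangle .
\]

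The main step is the Laplacian. First I differentiate $v=-g(\p_t,\nu)$ along a tangent direction, using $\bar\nabla_{e_j}\p_t=e_j+g(e_j,\p_t)\p_t$ from \eqref{nabla_t_eq} and the Weingarten relation $\bar\nabla_{e_i}\nu=h_{ij}g^{jk}e_k$ (dual to $\bar\nabla_{e_i}e_j=h_{ij}\nu$), obtaining $\nabla v=v\,\p_t^\top-A(\p_t^\top,\cdot)^\sharp$. A second differentiation followed by tracing requires two more ingredients: (i) the same two formulas give $\nabla_i(\p_t^\top)_j=g_{ij}+(\p_t^\top)_i(\p_t^\top)_j-h_{ij}v$, whose trace incidentally recovers \eqref{delta_teq}; and (ii) the Codazzi equation $\nabla_k h_{ij}=\nabla_i h_{jk}$ --- which holds precisely because the mixed component $R(\,\cdot\,,\,\cdot\,,\,\cdot\,,\nu)$ of the de Sitter curvature tensor \eqref{ds_curvature} vanishes --- and which converts $\operatorname{div}_M A$ into $\nabla H$. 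Assembling these in the trace of $\nabla_i\nabla_j v$ yields
\[
\Delta v=2v+v^3-Hv^2+\langle\p_t^\top,\nabla v\rangle-\langle\nabla H,\p_t^\top\rangle-H-A(\p_t^\top,\p_t^\top)+|A|^2 v .
\]

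Subtracting, the $Hv^2$ and $\langle\nabla H,\p_t^\top\rangle$ terms cancel, leaving
\[
(\p_s-\Delta)v=2H-2v-v^3-\langle\p_t^\top,\nabla v\rangle+A(\p_t^\top,\p_t^\top)-|A|^2 v .
\]
I then multiply by $2v$, subtract $2|\nabla v|^2$, and use the two algebraic identities that follow from $\nabla v=v\,\p_t^\top-A(\p_t^\top,\cdot)^\sharp$ and $|\p_t^\top|^2=v^2-1$, namely
\[
\langle\p_t^\top,\nabla v\rangle=v(v^2-1)-A(\p_t^\top,\p_t^\top),\qquad |\nabla v|^2=v^2(v^2-1)-2vA(\p_t^\top,\p_t^\top)+\sum_i A(e_i,\p_t^\top)^2,
\]
to trade $A(\p_t^\top,\p_t^\top)$ and $\langle\p_t^\top,\nabla v\rangle$ for $|\nabla v|^2$, $\sum_i A(e_i,\p_t^\top)^2$ and powers of $v$; collecting terms then produces exactly \eqref{vsq_eq}.

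I expect the main obstacle to be the Laplacian computation: carrying out the two successive covariant differentiations of $v$ on the moving hypersurface while tracking the timelike-normal sign conventions (which differ from the Riemannian graph case), and correctly invoking Codazzi through the special algebraic form of the de Sitter curvature tensor. A secondary, purely bookkeeping, difficulty is the final rearrangement bringing $A(\p_t^\top,\p_t^\top)$ and $\langle\p_t^\top,\nabla v\rangle$ into the precise combination appearing in \eqref{vsq_eq}; evaluating both sides on the flat slicing $\bar M_s$ (where $v\equiv 1$, the shape operator is the identity, $H=3$, and both sides vanish) gives a useful consistency check to keep in reserve.
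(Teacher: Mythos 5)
Your proposal is correct and follows essentially the same route as the paper: compute $\nabla v$, $\Delta v$ (via the Weingarten relation, \eqref{nabla_t_eq}, and Codazzi), and $\partial_s v$ (you derive $\partial_s\nu=\nabla H$ directly where the paper cites Ecker--Huisken), then combine and pass to $v^2$ using $(\partial_s-\Delta)v^2=2v(\partial_s-\Delta)v-2|\nabla v|^2$ together with the identity \eqref{nablavsq}. All the intermediate formulas you state agree with the paper's \eqref{grad_eq}, \eqref{deltav_eq} and \eqref{vtime_der} after expanding $\langle\p_t^{\top},\nabla v\rangle=v(v^2-1)-A(\p_t^{\top},\p_t^{\top})$.
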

\begin{proof}
Using \eqref{nabla_t_eq}, we compute
\begin{equation}\label{grad_eq}
\nabla_{i}v=g(e_i,\p_t)v-g(\partial_t,h_{ij}e_j), 
\end{equation}
from which we infer
\begin{equation}\label{nablavsq}
|\nabla v|^2  = v^2(v^2-1)-2A\left(\p_t^{\top},\p_t^{\top}\right)v+A\left(e_i,\partial_t^{\top}\right)^2.
\end{equation}
Differentiating \eqref{grad_eq} once more, and using \eqref{nabla_t_eq} and the Codazzi equation,  we obtain
\begin{align}\label{deltav_eq}
\Delta v&=-Hv^2\nonumber+(3+g(e_i,\p_t)^2)v+ g(e_i,\p_t)^2v \nonumber \\
&-2g(e_i,\p_t)g(e_j,\p_t)h_{ij}-H-g(\p_t,\nabla H)+|A|^2v\\
&=-H(v^2+1)+2v^3+v\nonumber -g(\partial_t,\nabla H)+|A|^2v -2 A(\partial_t^{\top},\p_t^{\top})\\
&= -H(v^2+1)+v^3+2v\nonumber -g(\partial_t,\nabla H)+|A|^2v-\frac{A\left(e_i,\partial_t^{\top}\right)^2}{v}+\frac{|\nabla v|^2}{v},
\end{align}
where in the last equation we have used \eqref{nablavsq} again. 
Using \cite[Proposition 3.1]{Ecker_Huisken_lor} and \eqref{nabla_t_eq} we have
\begin{equation}\label{vtime_der}
\partial_{s}v=-g( \p_t, \nabla H)-Hg\left(\bar{\nabla}_{\nu}\partial_t,\nu \right)=-g( \p_t, \nabla H)-H(v^2-1),
\end{equation}
which, put together with \eqref{deltav_eq}, yields
\begin{equation}
\left(\partial_s-\Delta\right)v =2H-v^3-2v-|A|^2v+\frac{A(e_i,\p_t^{\top})^2}{v}-\frac{|\nabla v|^2}{v},
\end{equation}
from which the asserted result follows readily. 
\end{proof}

We now derive two differential inequality for $v^2$. The first will be useful to obtain the interior gradient estimate, and the second will be useful for the flatness estimate.  
\begin{corollary}
For every $\delta\in [0,1/3]$ 
\begin{equation}\label{v_evolve_ineq}
(\partial_s-\Delta) v^2 \leq -(4+\delta)|\nabla v|^2 -2(1-\delta) v^4 +2H^2v^2+4Hv.
\end{equation}
\end{corollary}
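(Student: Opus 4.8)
The plan is to reduce \eqref{v_evolve_ineq} to a purely algebraic pointwise inequality and prove that. Substituting \eqref{vsq_eq} into the left-hand side of \eqref{v_evolve_ineq} and cancelling the common term $4Hv$, the claim \eqref{v_evolve_ineq} is equivalent to
\[
\delta |\nabla v|^2 + 2A(e_i,\p_t^\top)^2 - 2|A|^2 v^2 - 2H^2 v^2 \leq 2\delta v^4 + 4v^2 .
\]
To prove this I would work at a fixed point in a principal frame $\{e_1,e_2,e_3\}$, writing $\lambda_i$ for the principal curvatures and $a_i = g(e_i,\p_t^\top)$. Then $\sum_i a_i^2 = |\p_t^\top|^2 = v^2-1$, and \eqref{grad_eq} gives $\nabla_i v = a_i(v-\lambda_i)$, whence $|\nabla v|^2 = \sum_i a_i^2 (v-\lambda_i)^2$ and $A(e_i,\p_t^\top)^2 = \sum_i \lambda_i^2 a_i^2$.

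The key manipulation is to expand $|A|^2 v^2$ and $H^2 v^2$ using $v^2 = 1 + \sum_k a_k^2$, writing $2|A|^2 v^2 = 2|A|^2 + 2\sum_{j,k}\lambda_j^2 a_k^2$ and $2H^2 v^2 = 2H^2 + 2\sum_k H^2 a_k^2$. The diagonal part of $2\sum_{j,k}\lambda_j^2 a_k^2$ cancels against $2A(e_i,\p_t^\top)^2$, leaving $-2\sum_{j\neq k}\lambda_j^2 a_k^2 = -2\sum_k a_k^2(|A|^2-\lambda_k^2)$; after discarding the harmless nonpositive term $-2|A|^2-2H^2$, the inequality reduces to
\[
\sum_k a_k^2\,\Bigl[\delta(v-\lambda_k)^2 - 2(|A|^2-\lambda_k^2) - 2H^2\Bigr] \leq 2\delta v^4 + 4v^2 .
\]
Since $\sum_k a_k^2 = v^2-1 < v^2$ and the target $2\delta v^2 + 4$ is nonnegative, it suffices to bound each bracket by $2\delta v^2 + 4$. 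Here the three-dimensionality of $M$ enters: for fixed $k$ there are exactly two indices $j\neq k$, so Cauchy--Schwarz gives $(H-\lambda_k)^2 = \bigl(\sum_{j\neq k}\lambda_j\bigr)^2 \leq 2\sum_{j\neq k}\lambda_j^2 = 2(|A|^2-\lambda_k^2)$; hence $2(|A|^2-\lambda_k^2) + 2H^2 \geq (H-\lambda_k)^2 + 2H^2$, and minimizing $(H-\lambda_k)^2 + 2H^2$ over $H$ yields the lower bound $\tfrac23\lambda_k^2$. Therefore each bracket is at most $\delta(v-\lambda_k)^2 - \tfrac23\lambda_k^2$, which as a function of $\lambda_k$ is a downward-opening parabola (because $\delta \leq \tfrac13 < \tfrac23$) with maximum value $\tfrac{2\delta v^2}{2-3\delta}$; since $2-3\delta \geq 1$ this is $\leq 2\delta v^2 \leq 2\delta v^2 + 4$, which completes the proof. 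Note that this last step is precisely where the hypothesis $\delta\le\tfrac13$ is used.

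The step I expect to be the main obstacle — or at least the one it is easiest to get wrong — is realizing that one must not estimate $A(e_i,\p_t^\top)^2 \leq |A|^2|\p_t^\top|^2$ and bound $|\nabla v|^2$ separately at the outset. Doing so destroys the cancellation between $-2|A|^2v^2 - 2H^2v^2$ and $\delta|\nabla v|^2$, and this cancellation is genuinely needed in the regime where $|A|$ is large but $H$ is small, where $|\nabla v|^2$ can be of size comparable to $v^2|A|^2$ while all the competing positive terms stay bounded. Expanding $|A|^2v^2$ and $H^2v^2$ via $v^2 = 1+\sum a_k^2$ keeps the cancellation explicit and converts the whole estimate into the scalar quadratic bound above, which also makes transparent why the threshold $\delta \leq 1/3$ is the correct one.
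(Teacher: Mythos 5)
Your proof is correct, and I checked each step: the reduction of \eqref{v_evolve_ineq} to the pointwise inequality $\delta|\nabla v|^2+2A(e_i,\p_t^\top)^2-2|A|^2v^2-2H^2v^2\leq 2\delta v^4+4v^2$ is an exact consequence of \eqref{vsq_eq}; the frame identities $|\nabla v|^2=\sum_k a_k^2(v-\lambda_k)^2$ and $A(e_i,\p_t^\top)^2=\sum_k\lambda_k^2a_k^2$ follow from \eqref{grad_eq}; the diagonal cancellation, the two-term Cauchy--Schwarz $(H-\lambda_k)^2\leq 2(|A|^2-\lambda_k^2)$, the minimization $(H-\lambda_k)^2+2H^2\geq\tfrac{2}{3}\lambda_k^2$, and the parabola maximum $\tfrac{2\delta v^2}{2-3\delta}\leq 2\delta v^2$ for $\delta\leq\tfrac13$ are all right. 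The route is genuinely different from the paper's, though the two are cousins. The paper bounds $A(e_i,\p_t^\top)^2\leq\lambda_1^2(v^2-1)$ with $\lambda_1=\max_{|w|=1}|A(w,w)|$, invokes Bartnik's inequality $|A|^2\geq\tfrac43\lambda_1^2-H^2$ to produce a spare term $-\tfrac23\lambda_1^2v^2$, and then absorbs $\delta|\nabla v|^2$ using the separate estimate \eqref{gradv_norm_2}; your argument instead keeps an exact index-by-index expansion via $v^2=1+\sum_k a_k^2$, cancels the diagonal of $|A|^2v^2$ against $A(e_i,\p_t^\top)^2$, and re-derives the content of Bartnik's inequality inline (your minimization of $(H-\lambda_k)^2+2H^2$ is exactly where the constant $\tfrac43$, hence the threshold $\delta\leq\tfrac13$, comes from in the paper as well). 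What your version buys is self-containedness (no citation needed) and a transparent explanation of why $1/3$ is the right threshold; what the paper's version buys is brevity, since the uniform $\lambda_1$ bounds let it avoid tracking the per-eigenvalue structure. Your closing remark about why one must not crudely estimate $A(e_i,\p_t^\top)^2\leq|A|^2(v^2-1)$ at the outset is also accurate: that cruder bound is exactly what the paper uses for the \emph{second} inequality \eqref{v_eq_for_ambilic}, where the traceless second fundamental form absorbs the loss, but it would not yield the $-(4+\delta)|\nabla v|^2$ coefficient needed here.
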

\begin{proof}
We argue similarly to \cite[Theorem 3.1]{Bartnik} and \cite[Proposition 4.4]{Ecker_Huisken_lor}. Setting $\lambda_1=\max_{|w|=1}{|A(w,w)|}$, we can clearly bound
\begin{equation}\label{lest_vsq_term}
A(e_i,\partial_t^{\top})^2=g(e_i,\partial_t)^2A(e_i,e_i)^2 \leq \lambda_1^2 g(e_i,\partial_t)^2=\lambda_1^2(v^2-1).
\end{equation}
On the other hand, it follows from \cite[Eq. 3.17]{Bartnik} that for any spacelike hypersurface we have the bound 
\begin{equation}
|A|^2 \geq \frac{4}{3}\lambda_1^2-H^2.
\end{equation}
Plugging the above two inequalities into \eqref{vsq_eq} yields 
\begin{equation}\label{vsq_ineq1}
\left(\partial_s-\Delta\right)v^2 \leq 4Hv+2H^2v^2-2v^4-\frac{2}{3}\lambda_1^2v^2-4|\nabla v|^2.
\end{equation}
Finally, using  \eqref{nablavsq}, \eqref{lest_vsq_term} and using the absorbing inequality gives
\begin{equation}\label{gradv_norm_2}
|\nabla v|^2 \leq v^2(v^2-1)+2\lambda_1v(v^2-1)+\lambda_1^2(v^2-1) \leq 2v^2(v^2-1)+2\lambda_1^2(v^2-1).
\end{equation}
Combining this with \eqref{vsq_ineq1} yields the desired result.
\end{proof}

\begin{corollary}[Second $v$ inequality]
We can estimate
\begin{equation}\label{v_eq_for_ambilic}
(\partial_s-\Delta) v^2 \leq -4|\nabla v|^2-2(v^2-1).
\end{equation}
\end{corollary}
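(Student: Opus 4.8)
The plan is to read the result straight off the evolution identity \eqref{vsq_eq}, discarding term by term everything that does not already have the desired sign. The term $-4|\nabla v|^2$ appears verbatim on the right-hand side of \eqref{vsq_eq}, so it suffices to show that the remaining terms satisfy
\[
4Hv-2v^4-4v^2-2|A|^2v^2+2A(e_i,\p_t^{\top})^2\leq -2(v^2-1).
\]

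First I would dispose of the two indefinite second-fundamental-form terms $-2|A|^2v^2$ and $2A(e_i,\p_t^{\top})^2$ together. Using the pointwise bound already recorded in \eqref{lest_vsq_term}, namely $A(e_i,\p_t^{\top})^2\leq \lambda_1^2(v^2-1)$ with $\lambda_1=\max_{|w|=1}|A(w,w)|$, together with the trivial inequality $\lambda_1^2\leq |A|^2$, one obtains
\[
-2|A|^2v^2+2A(e_i,\p_t^{\top})^2\leq -2|A|^2v^2+2|A|^2(v^2-1)=-2|A|^2.
\]
Since $H=h_{ii}$ in three spatial dimensions, Cauchy--Schwarz gives $H^2\leq 3|A|^2$, hence $-2|A|^2\leq -\tfrac{2}{3}H^2$. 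Feeding these into \eqref{vsq_eq} leaves
\[
(\partial_s-\Delta)v^2\leq -4|\nabla v|^2+4Hv-2v^4-4v^2-\tfrac{2}{3}H^2.
\]

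The final step is to absorb the lone linear-in-$H$ term by Young's inequality in the form $4Hv\leq \tfrac{2}{3}H^2+6v^2$, which exactly cancels $-\tfrac{2}{3}H^2$ and yields
\[
(\partial_s-\Delta)v^2\leq -4|\nabla v|^2+2v^2-2v^4=-4|\nabla v|^2-2v^2(v^2-1)\leq -4|\nabla v|^2-2(v^2-1),
\]
where the last inequality uses $v\geq 1$. There is no genuine obstacle here; the only thing requiring care is the bookkeeping of constants — which curvature term to pair $A(e_i,\p_t^{\top})^2$ against, and the weight $\tfrac{2}{3}$ in Young's inequality — arranged so that the leftover quartic collapses to $-2v^2(v^2-1)$. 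This is exactly why the argument deliberately uses the crude bounds $\lambda_1^2\le|A|^2$ and $H^2\le 3|A|^2$ rather than the sharper inequality $|A|^2\geq\tfrac{4}{3}\lambda_1^2-H^2$ that was convenient for \eqref{v_evolve_ineq}.
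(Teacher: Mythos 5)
Your proof is correct and follows essentially the same route as the paper: the paper likewise bounds $2A(e_i,\p_t^{\top})^2-2|A|^2v^2\le -2|A|^2$ via \eqref{lest_vsq_term}, and uses the identity $4Hv=\tfrac{2}{3}H^2+6v^2-2\bigl(\tfrac{H}{\sqrt{3}}-\sqrt{3}v\bigr)^2$ together with $|A|^2\ge H^2/3$ (the traceless part), which is exactly your Young's inequality plus Cauchy--Schwarz in inequality form. The bookkeeping matches and the conclusion $-2v^2(v^2-1)\le -2(v^2-1)$ is the same final step.
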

\begin{proof}
\eqref{lest_vsq_term} implies 
\begin{equation}
A(e_i,\partial_t^{\top})^2\leq |A|^2(v^2-1), 
\end{equation}
while clearly 
\begin{equation}
4Hv= 2\frac{H^2}{3}+6v^2-2\left(\frac{H}{\sqrt{3}}-\sqrt{3}v\right)^2.
\end{equation}
Substituting this into \eqref{vsq_eq} we obtain
\begin{align}\label{vsq_ineq3}
\left(\partial_s-\Delta\right)v^2 &\leq -2v^2(v^2-1)-4|\nabla v|^2-2\left(\frac{H}{\sqrt{3}}-\sqrt{3}v\right)^2-2\left(|A|^2-\frac{H^2}{3}\right)\\
&\leq -2(v^2-1)-4|\nabla v|^2,
\end{align}
where for the second inequality we have identified $|A|^2-H^2/3$ as the norm of the traceless second fundamental form. 
\end{proof}

\begin{remark}
Note that the equation \eqref{v_eq_for_ambilic} by itself would have sufficed to conclude flatness in the close setting, i.e. where \eqref{ds_metric} was supported on the torus $\mathbb{T}^3\times \mathbb{R}$. Getting a gradient term with coefficient more than $4$ as in \eqref{v_evolve_ineq} is what is needed for an unconditional interior gradient estimate.
\end{remark}

\subsection*{Cut-off evolution} For $\alpha>0$, let us consider the function $r=r_{\alpha}$ given by  
\begin{equation}\label{r_def}
r(x_1,x_2,x_3,t)=r_{\alpha}(x_1,x_2,x_3,t):=e^{\alpha t}(x_1^2+x_2^2+x_3^2)=e^{\alpha t}|x|^2.
\end{equation}
$r_{\alpha}$  will serve as a building block for the cutoff functions involved in the local estimates. We now compute its evolution.

\begin{lemma}\label{r_evolve_ineq}
For every $\varepsilon>0$ and $\alpha\in (0,2)$ there exists $T=T(\alpha,\varepsilon)$ such that if $t\geq T$ we have 
\begin{equation}
(\partial_s-\Delta)r \geq (-\alpha^2 r-\varepsilon)v^2
\end{equation}
\end{lemma}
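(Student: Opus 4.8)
The plan is to compute $(\partial_s - \Delta)r$ directly from the formulas already established for $\Delta x_i$, $\Delta t$, and the cross terms in $|\nabla x_i|^2$, $|\nabla t|^2$, $g(\nabla x_i,\nabla t)$, and then to isolate the ``bad'' terms and show they are controlled by $\varepsilon v^2$ once $t$ is large, using that $\alpha < 2$. Write $\rho = |x|^2 = \sum x_i^2$, so $r = e^{\alpha t}\rho$. Since the ambient metric is static (no explicit $s$ dependence) and $r$ is an ambient function, $\partial_s r = H\nu(r)$ along the flow; equivalently one can use $(\partial_s - \Delta)r = \bar\Box r + \overline{\mathrm{Hess}}\,r(\nu,\nu) - $ wait, more cleanly: for an ambient function $f$, $(\partial_s - \Delta)f = -\bar\Box f - \overline{\mathrm{Hess}}f(\nu,\nu)$ is \emph{not} right either; rather $\partial_s f = H\nu(f) = Hg(\bar\nabla f,\nu)$ and $\Delta f = \bar\Box f + H\nu(f) + \overline{\mathrm{Hess}}f(\nu,\nu)$ by \eqref{wave_lap}, so
\begin{equation}
(\partial_s - \Delta)r = -\bar\Box r - \overline{\mathrm{Hess}}\,r(\nu,\nu).
\end{equation}
So the first step is to compute $\bar\Box r$ and $\overline{\mathrm{Hess}}\,r(\nu,\nu)$ from $\overline{\mathrm{Hess}}x_i$, $\overline{\mathrm{Hess}}t$, and the product rule for Hessians of $r = e^{\alpha t}\sum x_i^2$; this brings in $\bar\nabla t$, $\bar\nabla x_i$, their norms and pairings, all of which are recorded in \eqref{cor_grads}, \eqref{cor_hess1}, \eqref{cor_hess2}.

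The second step is bookkeeping: after expanding, $(\partial_s - \Delta)r$ will be $e^{\alpha t}$ times a sum of terms, schematically $-\alpha^2\rho\,v^2$ (the leading term we want), plus terms like $\alpha\rho\cdot(\text{stuff involving }Hv, v^2)$, a constant $-2\cdot 3 = -6$ (times $e^{\alpha t}$, with the right sign once normal components are accounted for), and cross terms of the form $e^{\alpha t}\cdot(\text{linear in }x)\cdot v\cdot g(\partial_i,\nu)$ coming from $\overline{\mathrm{Hess}}x_i(\partial_i,\partial_t)$ evaluated on $\nu$. The point is that the factor $e^{\alpha t}$ multiplies everything, and the ``error'' terms beyond $-\alpha^2 r v^2$ either carry an extra power of $e^{-2t}$ (from $\bar\nabla x_i = e^{-2t}\partial_i$, hence $|\bar\nabla x_i|^2 = e^{-2t}$ and similar), or are lower order in $r$, or can be absorbed. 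Concretely I expect the structure $(\partial_s-\Delta)r = -\alpha^2 r v^2 + \alpha r\,(\text{terms like }2v^2 - Hv - \ldots) + O(e^{-2t})\cdot(\text{polynomial in }|x|, v)$; since $\alpha < 2$, the coefficient of $r v^2$ in the genuinely leading part can be written so that $-\alpha^2$ absorbs the $+2\alpha$ from the $2v^2$ contribution of $\overline{\mathrm{Hess}}t$ — this is precisely where $\alpha<2$ is used (one needs $\alpha^2 > 2\alpha - $ something, i.e.\ $\alpha < 2$ up to the manoeuvre).

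The third step is the large-$t$ estimate. Every error term is either $e^{-2t}$ times something polynomial in $|x|$ and $v$, or involves $Hv$ which, using $4Hv \le \varepsilon'\,H^2 + C_{\varepsilon'}v^2$ plus the hypothesis $|H|\le C$ — no, better: $H$ is bounded and non-negative, so $Hv \le C v \le C v^2$, and such terms come with either an $e^{-2t}$ factor or a factor of $\alpha$ that we keep and fold into the main term carefully. The terms that do \emph{not} a priori have an $e^{-2t}$ decay are the $\alpha$-order ones, $\alpha r v^2$-type and $\alpha r\cdot Hv$-type; for these one writes $4Hv = \tfrac{2}{3}H^2 + 6v^2 - 2(H/\sqrt3 - \sqrt3 v)^2 \le \tfrac23 C^2 + 6v^2$ (reusing the identity from the proof of \eqref{v_eq_for_ambilic}), converting them into a bounded constant plus a multiple of $v^2$, and then the coefficient of $rv^2$ is a fixed quadratic in $\alpha$; one checks that for $\alpha \in (0,2)$ this can be arranged to be $\ge -\alpha^2$ exactly (this is the algebra hiding behind the clean statement). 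The remaining genuinely-error terms all carry $e^{-2t}$ or are of the form $e^{\alpha t}e^{-2t}|x|$, and since $|x| \le R$ on $\mathcal N_R$ — actually one should not assume $|x|\le R$ here as the lemma is stated for general graphical flows; rather these terms are bounded by $e^{(\alpha-2)t}$ times a polynomial in $|x|$, and on a fixed spatial region (or using $r = e^{\alpha t}|x|^2$ to re-express $|x|^2 = e^{-\alpha t}r$, so $e^{(\alpha-2)t}|x|^2 = e^{-2t}r \to 0$) these are $\le \varepsilon v^2$ once $t \ge T(\alpha,\varepsilon)$. Collecting: $(\partial_s - \Delta)r \ge -\alpha^2 r v^2 - \varepsilon v^2 = (-\alpha^2 r - \varepsilon)v^2$ for $t \ge T$.

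\textbf{Main obstacle.} The hard part is the bookkeeping in Step 2 — getting the cross terms $\overline{\mathrm{Hess}}x_i(\partial_i,\partial_t)$ evaluated on $\nu$ (which produce indefinite-sign terms $\sim e^{-2t}x_i v\, g(\partial_i,\nu)$) and the quadratic-in-$\nu$ Hessian terms to collapse into the clean form, and verifying that the coefficient of $rv^2$ really is $\le -\alpha^2$ and not something like $-\alpha^2 + 2\alpha$ that would force $\alpha$ into a smaller range. I expect the decomposition $\partial_t = \partial_t^\top + (\text{normal part})$, with $g(\partial_t,\nu) = -v$, together with Cauchy–Schwarz $|g(\partial_i,\nu)| \le e^{t}\sqrt{v^2-1}$ (from $|\nabla x_i|^2 = e^{-2t} + e^{-4t}g(\nu,\partial_i)^2$ being bounded — actually the sharp bound comes from $\sum_i g(\partial_i,\nu)^2 \le e^{2t}(v^2-1)$, since $\nu - (\text{its }\partial_t\text{-component})$ has $g$-norm controlled by $v^2-1$ up to the conformal factor), to be exactly the tool that converts the indefinite cross terms into $e^{-2t}\cdot(\text{something})\cdot v^2$-type errors that vanish as $t\to\infty$. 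Once that absorption is set up, the large-$t$ conclusion is routine.
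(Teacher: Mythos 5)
Your plan is essentially the paper's proof: the paper computes $\Delta r$ from \eqref{delta_xieq}, \eqref{delta_teq}, \eqref{grad_sq} and \eqref{cross_term}, notes that $\partial_s r = H\nu(r)$ cancels the $H$-terms exactly (equivalently, your identity $(\partial_s-\Delta)r=-\bar\Box r-\overline{\mathrm{Hess}}\,r(\nu,\nu)$), handles the cross terms by weighted Cauchy--Schwarz, and uses $\alpha<2$ to kill the $e^{(\alpha-2)t}$-weighted errors for $t\geq T$. Two points in your sketch should be tightened. First, your guess about where $\alpha<2$ enters is off: carrying out the computation, the $e^{\alpha t}$-factor alone contributes $(\alpha-\alpha^2)rv^2+(\alpha^2+2\alpha)r$ to $(\partial_s-\Delta)r$, and $\alpha-\alpha^2\geq-\alpha^2$ for all $\alpha>0$, so there is no ``$+2\alpha$'' to absorb; the hypothesis $\alpha<2$ is used \emph{only} to make $e^{(\alpha-2)t}\to0$. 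Second, the cross term $x_ie^{(\alpha-2)t}v\,g(\nu,\partial_i)$ does \emph{not} reduce entirely to decaying errors: the weighted Cauchy--Schwarz splits it as $\delta rv^2+\delta^{-1}e^{(\alpha-2)t}v^2$, and the piece $\delta rv^2$ neither decays in $t$ nor fits the $\varepsilon v^2$ budget (it carries a factor of $r$); it must be absorbed into the \emph{positive} terms $3\alpha r+\alpha r(v^2-1)$ left over from the $e^{\alpha t}$-computation, which requires choosing $\delta<\alpha$ (this is \eqref{no_eps_r} in the paper). With those two corrections your argument closes as stated.
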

\begin{proof}
Using \eqref{cross_term}, we get 
\begin{equation}\label{grad_interact}
g(\nabla(x_i^2),\nabla(e^{\alpha t}))=2\alpha x_ie^{(\alpha-2)t}vg(\p_i,\nu).
\end{equation}
Using \eqref{delta_xieq} and \eqref{grad_sq}, we get
\begin{equation}\label{lapsq}
\Delta x_i^2=2x_ie^{-2t}\left(H-2v)g(\nu,\partial_i\right)+2e^{-2t}+2e^{-4t}g(\partial_i,\nu)^2.
\end{equation}
Similarly, using \eqref{delta_teq} and \eqref{grad_sq}, we get
\begin{align}\label{lapealphat}
\Delta e^{\alpha t}=&-3\alpha e^{\alpha t}+\alpha Hve^{\alpha t}-\alpha e^{\alpha t}(v^2-1)+\alpha^2 e^{\alpha t}(v^2-1).
\end{align}
Combining \eqref{grad_interact},  \eqref{lapsq} and \eqref{lapealphat}  we get
\begin{align}\label{Deltar_eq}
\Delta r=& -3\alpha r+\alpha Hvr-\alpha r(v^2-1)+\alpha^2 r(v^2-1) \nonumber\\
&+2x_ie^{(\alpha-2)t}\left(H-2v)g(\nu,\partial_i\right)+6e^{(\alpha-2)t}+2e^{(\alpha-2)t}(v^2-1)\\
&+4\alpha x_ie^{(\alpha-2)t}vg(\p_i,\nu) \nonumber.
\end{align}
{
Now, given $\delta>0$ we can bound
\begin{equation}\label{cross_term2}
|x_ie^{(\alpha-2)t}vg(\nu,\partial_i)| \leq \delta|x_i|^2e^{\alpha t}v^2+\frac{1}{\delta}e^{(\alpha-4)t}g(\nu,\partial_i)^2\leq \delta r v^2+\frac{1}{\delta}e^{(\alpha-2)t}v^2.
\end{equation}
Note that if $\delta<\alpha$ then 
\begin{equation}\label{no_eps_r}
\delta r v^2 \leq 3\alpha r+\alpha r(v^2-1).
\end{equation}
}
Since $\alpha\in (0,2)$, making use of all the negative exponents, we see from \eqref{cross_term2} \eqref{no_eps_r} and \eqref{Deltar_eq} that
\begin{equation}\label{delta_rbd}
\Delta r \leq \alpha Hvr +2x_ie^{(\alpha-2)t}Hg(\nu,\partial_i)+\alpha^2 rv^2+\varepsilon v^2,
\end{equation}
if $t\geq T$ for $T$ large enough. 

On the other hand, in light of \eqref{cor_grads}, we can compute the time evolution of $r$ as 
\begin{equation}
\partial_s r= \alpha Hvr+2x_ie^{(\alpha-2)t}H g(\nu,\partial_i).
\end{equation}
Combining this with \eqref{delta_rbd} gives the desired result.
\end{proof}

We will also need a two sided bound for the gradient of $r$. 
\begin{lemma}\label{grad_r_size}
For every $\varepsilon>0$ and $\alpha\in (0,2)$ there exists $T=T(\alpha,\varepsilon)$ such that if $t\geq T$
\begin{equation}
 \alpha^2(1-\varepsilon)r^2(v^2-1)-\varepsilon rv^2 \leq |\nabla r|^2 \leq 2\alpha^2r^2(v^2-1)+\varepsilon r v^2
\end{equation}
\end{lemma}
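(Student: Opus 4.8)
The plan is to compute $\nabla r$ directly from the product structure $r=e^{\alpha t}|x|^2$ and then expand $|\nabla r|^2$ into a main term $\alpha^2 r^2(v^2-1)$, a single cross term, and two manifestly non-negative error terms, all of which beyond the main term are lower order once $t$ is large — precisely because $\alpha<2$ makes strictly negative powers of $e^t$ appear.

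First I would write $\nabla r=\alpha r\,\nabla t+2e^{\alpha t}\sum_i x_i\,\nabla x_i$ using $\nabla(e^{\alpha t})=\alpha e^{\alpha t}\nabla t$. Squaring and substituting the formulas \eqref{grad_sq} and \eqref{cross_term} for $|\nabla t|^2=v^2-1$, for $|\nabla x_i|^2=e^{-2t}+e^{-4t}g(\partial_i,\nu)^2$ (and, off the diagonal, $g(\nabla x_i,\nabla x_j)=e^{-2t}\delta_{ij}+e^{-4t}g(\partial_i,\nu)g(\partial_j,\nu)$), and $g(\nabla x_i,\nabla t)=e^{-2t}v\,g(\partial_i,\nu)$, together with the elementary identity $\sum_i g(\partial_i,\nu)^2=e^{2t}(v^2-1)$ — obtained by splitting $\nu$ into its $\partial_t$-component $v\,\partial_t$ and its component tangent to the slice $\{t=\mathrm{const}\}$ and reading off $g(\nu,\nu)=-1$ — one arrives at
\[
|\nabla r|^2=\alpha^2 r^2(v^2-1)+4\alpha e^{(2\alpha-2)t}|x|^2 vP+4e^{(2\alpha-2)t}|x|^2+4e^{(2\alpha-4)t}P^2 ,
\]
where $P:=\sum_i x_i\,g(\partial_i,\nu)$. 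The first term is the asserted main term, and the last two are $\geq 0$, so the lower bound will follow once the cross term $4\alpha e^{(2\alpha-2)t}|x|^2 vP$ is shown to be small.

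To estimate the cross term, Cauchy--Schwarz gives $P^2\leq|x|^2\sum_i g(\partial_i,\nu)^2=|x|^2 e^{2t}(v^2-1)$, hence $\big|4\alpha e^{(2\alpha-2)t}|x|^2 vP\big|\leq 4\alpha e^{(2\alpha-1)t}|x|^3 v\sqrt{v^2-1}=4\alpha\,\big(r\sqrt{v^2-1}\,\big)\big(v\sqrt{r}\,e^{(\alpha-2)t/2}\big)$. Applying Young's inequality to the last product, I can bound it by $\mu\,\alpha^2 r^2(v^2-1)+\tfrac{C}{\mu}e^{(\alpha-2)t}r v^2$ for any $\mu>0$; choosing $\mu$ a small multiple of $\varepsilon$ and then, since $\alpha-2<0$, choosing $T=T(\alpha,\varepsilon)$ so that $\tfrac{C}{\mu}e^{(\alpha-2)t}\leq\varepsilon$ for $t\geq T$, the cross term is $\leq\varepsilon\alpha^2 r^2(v^2-1)+\varepsilon r v^2$. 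The same mechanism handles the last two terms: $4e^{(2\alpha-2)t}|x|^2=4e^{(\alpha-2)t}r$ and $4e^{(2\alpha-4)t}P^2\leq 4e^{(\alpha-2)t}r(v^2-1)$, so, using $v^2\geq1$ and $\alpha<2$, both are $\leq\varepsilon r v^2$ once $t\geq T$. Combining these with the displayed identity gives $|\nabla r|^2\geq(1-\varepsilon)\alpha^2 r^2(v^2-1)-\varepsilon r v^2$ and $|\nabla r|^2\leq(1+\varepsilon)\alpha^2 r^2(v^2-1)+2\varepsilon r v^2$; running the argument with $\varepsilon/2$ in place of $\varepsilon$ (and assuming $\varepsilon\leq1$) turns these into exactly the claimed two-sided bound.

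The only point needing any thought is the cross term: it carries the factor $P$, and the sole control we have on the $g(\partial_i,\nu)$ is through $\sum_i g(\partial_i,\nu)^2=e^{2t}(v^2-1)$, so $P$ can be as large as $|x|e^t\sqrt{v^2-1}$ and the cross term is of order $r^{3/2}v^2 e^{(\alpha-2)t/2}$, which is \emph{not} absorbable into $\alpha^2 r^2(v^2-1)$ uniformly — it becomes subordinate only after paying the decaying factor $e^{(\alpha-2)t}$ via Young's inequality. This is exactly why the restriction $\alpha<2$ and the ``$t\geq T$'' hypothesis enter; everything else is the routine expansion above.
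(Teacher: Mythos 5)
Your proof is correct and is essentially the paper's argument: both start from $\nabla r=\alpha r\nabla t+2e^{\alpha t}x_i\nabla x_i$, identify $\alpha^2r^2(v^2-1)$ as the main term, and absorb everything else using the decaying factor $e^{(\alpha-2)t}$ available because $\alpha<2$ and $t\geq T$. The only cosmetic difference is that the paper applies the weighted inequality $|a+b|^2\geq(1-\varepsilon)|a|^2-\varepsilon^{-1}|b|^2$ to the whole sum at once, whereas you expand the square and treat the cross term explicitly via Cauchy--Schwarz and Young; the absorption is the same.
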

\begin{proof}
Since 
\[
\nabla r= \alpha r\nabla t +2e^{\alpha t}x_i\nabla x_i.
\] 
then by virtue of \eqref{grad_sq}, we have
\begin{equation}
|\nabla r|^2 \geq (1-\varepsilon)\alpha^2 r^2|\nabla t|^2-\frac{4x_i^2e^{2\alpha t}}{\varepsilon}|\nabla x_i|^2\geq \alpha^2(1-\varepsilon)r^2(v^2-1)-\varepsilon rv^2,
\end{equation}
provided $T$ is large enough, which gives the lower bound. The upper bound similarly follows.
\end{proof}

\subsection*{Curvature evolution} While it is not strictly necessary for our current discussion, we record here the evolution equation of curvatures.
\begin{lemma}
The second fundamental form satisfies the evolution
\begin{align}
\left(\frac{d}{ds}-\Delta\right)h_{ij}=&2Hh_{ik}h_{jk}-2H\delta_{ij}-h_{ij}(|A|^2-9)
\end{align}
and
\begin{equation}
\left(\frac{d}{ds}-\Delta\right)|A|^2=-2|\nabla A|^2-4H^2+2|A|^2(9-|A|^2) 
\end{equation}
In particular, 
\begin{equation}
\left(\frac{d}{ds}-\Delta\right)\left(|A|^2-H^2/3\right) \leq 18\left(|A|^2-H^2/3\right)-\frac{2H^2}{3}\left(|A|^2-H^2/3\right).
\end{equation}
\end{lemma}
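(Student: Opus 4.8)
\emph{Overview and Step 1 (master identity).} The plan is to derive all three statements from one master computation of $(\partial_s-\Delta)h_{ij}$ — the usual Simons-type identity adapted to the Lorentzian ambient — and then to trace it down successively to $|A|^2$, to $H$, and finally to $|A|^2-H^2/3$. For a mean curvature flow $\partial_s F=H\nu$ in an arbitrary ambient manifold there is a general evolution equation for the second fundamental form (as in \cite{Ecker_Huisken_lor}, following \cite{Bartnik}), obtained by commuting $\nabla_i\nabla_j H$ past $\Delta h_{ij}$ via the Codazzi equation and using the flow identities $\partial_s g_{ij}=2Hh_{ij}$ and $\partial_s\nu=\nabla H$. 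Schematically it reads
\[
(\partial_s-\Delta)h_{ij}=\pm 2Hh_{ik}h_{kj}\mp|A|^2h_{ij}+\mathcal{R}_{ij}+\mathcal{D}_{ij},
\]
where $\mathcal{R}_{ij}$ is a universal algebraic contraction of the ambient curvature tensor $\bar R$ with $\nu$ and the frame $\{e_i\}$, and $\mathcal{D}_{ij}$ is a universal contraction of $\bar\nabla\bar R$. Recalling this identity precisely is the first task.

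\emph{Step 2 (specialize to de Sitter).} By \eqref{ds_curvature} de Sitter has constant curvature, hence is locally symmetric, so $\bar\nabla\bar R\equiv 0$ and the term $\mathcal{D}_{ij}$ drops out. The algebraic term $\mathcal{R}_{ij}$ is evaluated directly from \eqref{ds_curvature} using $g(\nu,\nu)=-1$, $g(e_i,e_j)=\delta_{ij}$, $g(e_i,\nu)=0$: e.g. $\bar R(e_i,\nu,\nu,e_j)=-\delta_{ij}$, $\sum_k\bar R(e_k,\nu,\nu,e_k)=-3$, and $\bar R(e_i,e_k,e_l,e_j)=\delta_{il}\delta_{kj}-\delta_{ij}\delta_{kl}$, whose contractions against $h$, $H$ and $\delta$ collapse to the explicit constants of the statement, giving $\mathcal{R}_{ij}=9h_{ij}-2H\delta_{ij}$ and fixing the remaining signs as $+2Hh_{ik}h_{jk}$ and $-|A|^2h_{ij}$. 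I expect this sign bookkeeping to be the main obstacle: because $\nu$ is timelike, the second fundamental form terms arising from the Gauss equation enter with the opposite sign to the Riemannian case, and the flow direction $+H\nu$ (rather than $-H\nu$) flips further signs, so some care is needed to land on the stated signs rather than their negatives.

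\emph{Step 3 (trace down).} Writing $|A|^2=g^{ik}g^{jl}h_{ij}h_{kl}$ and differentiating, the metric evolution $\partial_s g^{ij}=-2Hh^{ij}$ produces a $-4H\,\mathrm{tr}(A^3)$ term that exactly cancels the $+4H\,\mathrm{tr}(A^3)$ coming from $2h^{ij}(\partial_s-\Delta)h_{ij}$; together with $\Delta|A|^2=2h^{ij}\Delta h_{ij}+2|\nabla A|^2$ and the traces $h_{ij}\delta_{ij}=H$, $h_{ij}h_{ij}=|A|^2$, this yields $(\partial_s-\Delta)|A|^2=-2|\nabla A|^2-4H^2+2|A|^2(9-|A|^2)$. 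Tracing the master identity once, $(\partial_s-\Delta)H=g^{ij}(\partial_s-\Delta)h_{ij}-2Hh^{ij}h_{ij}=H(3-|A|^2)$, whence $(\partial_s-\Delta)H^2=2H^2(3-|A|^2)-2|\nabla H|^2$.

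\emph{Step 4 (combine).} Subtracting the two identities of Step 3, the cubic and quartic terms reorganize into a perfect square, and one obtains, abbreviating $Q:=|A|^2-\tfrac{H^2}{3}$,
\[
(\partial_s-\Delta)Q=18Q-\tfrac{2H^2}{3}Q-2|\nabla A|^2+\tfrac23|\nabla H|^2-2Q^2 .
\]
The last three terms are $\le 0$: the square is manifestly non-positive, and $-2|\nabla A|^2+\tfrac23|\nabla H|^2\le 0$ since $|\nabla H|^2=\sum_k\big(\sum_i\nabla_k h_{ii}\big)^2\le 3|\nabla A|^2$ by Cauchy--Schwarz in dimension $n=3$. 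Dropping them gives the asserted inequality. One may note that $Q=|A|^2-H^2/3=|\mathring A|^2$ is the squared norm of the traceless second fundamental form, so this is really an estimate for $|\mathring A|^2$.
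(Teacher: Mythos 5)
Your proposal is correct and follows essentially the same route as the paper: the general Lorentzian evolution equation for $h_{ij}$ (the paper quotes \cite[Prop.~3.3]{Ecker_Huisken_lor}) specialized to the constant-curvature de Sitter ambient via $\mathrm{Ric}(\nu,\nu)=-3$, then traced to give the $|A|^2$ and $H$ equations and recombined for $|A|^2-H^2/3$. The only cosmetic difference is in the final step, where the paper identifies $-2|\nabla A|^2+\tfrac{2}{3}|\nabla H|^2$ as $-2\left|\nabla\left(A-\tfrac{H}{3}g\right)\right|^2$ whereas you discard it by Cauchy--Schwarz; your intermediate identities ($\mathcal{R}_{ij}=9h_{ij}-2H\delta_{ij}$, $(\partial_s-\Delta)H=H(3-|A|^2)$, and the reorganized equation for $Q$) all agree with the paper's computation.
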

\begin{proof}
Using \cite[Prop. 3.3]{Ecker_Huisken_lor} and $\Ric(\nu,\nu)=-3$ we obtain.
\begin{align}
\left(\frac{d}{ds}-\Delta\right)h_{ij}=&2Hh_{ik}h_{jk}-h_{ij}(|A|^2-3)-2h_{kl}(\delta_{lk}\delta_{ij}-\delta_{lj}\delta_{ik})\\
&-h_{jl}(\delta_{lk}\delta_{ki}-\delta_{li}\delta_{kk})-h_{li}(\delta_{lk}\delta_{kj}-\delta_{lj}\delta_{kk})\\
&=2Hh_{ik}h_{jk}-h_{ij}(|A|^2-3)-2H\delta_{ij}+2h_{ij}-h_{ij}+3h_{ij}-h_{ij}+3h_{ij},
\end{align}
from which the first equation follows. The second equation follows from the first one after recalling that
\begin{equation}
\frac{d}{ds}g^{ij}=-2Hh_{ij}.
\end{equation}
Finally
\begin{align}
\frac{d}{ds}\left(|A|^2-\frac{H^2}{3}\right)&=-2|\nabla A|^2-4H^2+2|A|^2(9-|A|^2)+\frac{2}{3}|\nabla H|^2-\frac{2}{3}{H^2}(3-|A|^2)\\
&=2(9-|A|^2)\left(|A|^2-\frac{H^2}{3}\right)-2|\nabla A|^2+\frac{2}{3}|\nabla H|^2\\
&=2(9-|A|^2)\left(|A|^2-\frac{H^2}{3}\right)-2\left|\nabla \left(A-\frac{H}{3}g\right)\right|^2,
\end{align}
From which the last equation follows by adding and subtracting $\frac{2H^2}{3}\left(|A|^2-\frac{H^2}{3}\right)$.
\end{proof}

\section{Gradient estimates}

We are now ready to derive the local in space but global in time gradient estimate. Let us set
\begin{equation}
D_{\alpha,R}:=\{(x,t)\;|\; e^{\alpha t}|x|^2\leq R\}.
\end{equation}
Observe that for every $\alpha\in (0,2)$ the diameter of $D_{\alpha,R}\cap \{t=t_1\}$ goes to infinity as $t_1\rightarrow \infty$. 

\begin{theorem}[Gradient estimate]\label{grad_est}
There exists some $\alpha_0<2$ such that for every $\alpha<\alpha_0$ and $\Lambda<\infty$, there exist $T_0=T_0(\alpha)<\infty$, $R_0=R_0(\alpha){\in (2,\infty)}$ and  $C=C(\Lambda)<\infty$ with the following significance: Suppose $R\geq R_0$ and suppose that $M_s$ is a mean convex mean curvature flow of graphs in $\mathcal{N}_R$ such that $M_s$ is contained in $t\geq T_0$ and such that $\sup_s\sup_{M_s}H \leq \Lambda$. {
 Then there exists $s_0=s_0(M_0,\Lambda)<\infty$ such that for every $s\geq s_0$ we have 
\begin{equation}
\sup_{z\in M_s \cap D_{\alpha,R/2}} v \leq C.
\end{equation}
}
\end{theorem}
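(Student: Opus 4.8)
The strategy is the standard Ecker--Huisken cutoff argument, adapted to the $t$-dependent (flow-time independent) cutoff built from $r_\alpha$. The plan is to choose the cutoff to be a function $\varphi$ of $r = r_\alpha$, supported in $D_{\alpha,R}$, and to run the maximum principle on the quantity $\varphi v^2$, using the differential inequality \eqref{v_evolve_ineq} for $v^2$ together with Lemma \ref{r_evolve_ineq} and Lemma \ref{grad_r_size} for $r$. The key feature to exploit is that in \eqref{v_evolve_ineq} the coefficient of $|\nabla v|^2$ is $-(4+\delta)$, strictly more negative than $-4$; the surplus $\delta |\nabla v|^2$ is what lets us absorb the bad cross term $-\frac{2\varphi'}{\varphi} g(\nabla \varphi, \nabla v) v^2$ that arises when differentiating $\varphi v^2$, once one notes via Cauchy--Schwarz that this cross term is controlled by $\delta|\nabla v|^2 + \frac{C}{\delta}\frac{|\nabla\varphi|^2}{\varphi^2}v^2\cdot$, and that $|\nabla r|^2/r^2 \lesssim v^2 - 1 \le v^2$ by Lemma \ref{grad_r_size} (for $t \ge T_0$), so this contributes a term of order $v^4$ with a coefficient that can be made smaller than the $-2(1-\delta)v^4$ already present by choosing $\alpha$ (hence $\alpha^2$) small. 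The $+2H^2v^2 + 4Hv \le C(\Lambda)(1+v^2)$ terms are lower order and harmless against $v^4$.

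Concretely, I would first fix $\delta = 1/3$ (or any small positive value) in \eqref{v_evolve_ineq}, then choose $\varphi(r) = (R - r)_+^k$ or the Ecker--Huisken-type choice $\varphi = (1 - r/R)_+^2$ restricted to where it is positive, and compute $(\partial_s - \Delta)(\varphi v^2)$. There will be three groups of terms: (i) $v^2 (\partial_s - \Delta)\varphi$, which by the chain rule is $v^2[\varphi' (\partial_s - \Delta) r - \varphi'' |\nabla r|^2]$; here Lemma \ref{r_evolve_ineq} gives $(\partial_s - \Delta) r \ge (-\alpha^2 r - \varepsilon)v^2$ and Lemma \ref{grad_r_size} bounds $|\nabla r|^2$, so this whole group is bounded by (a constant depending on $\alpha$, $R$) times $v^4$ — and crucially the constant can be made as small as we like by taking $\alpha$ small and $R$ large, since $\alpha^2 r \le \alpha^2 R$ need not be small but the combination with the favorable sign of $\varphi''$ works in our favor; (ii) the cross term $2 g(\nabla\varphi, \nabla v^2)/\ldots$, which by Cauchy--Schwarz and the surplus gradient term is absorbed; (iii) the genuinely good term $-2(1-\delta)\varphi v^4$. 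Summing, at an interior spatial maximum of $\varphi v^2$ on $M_s \cap \{\varphi > 0\}$ one obtains $(\partial_s - \Delta)(\varphi v^2) \le -c\, \varphi v^4 + C(\Lambda)(1 + v^2) \le -c (\varphi v^2)^2/\sup\varphi + C(\Lambda)(\varphi v^2) + C(\Lambda)$ once one also uses $\varphi \le \sup\varphi < \infty$ on its support, i.e. a clean ODE obstruction $\frac{d}{ds}\max(\varphi v^2) \le -c' (\max \varphi v^2)^2 + C$ which forces $\max_{M_s}(\varphi v^2) \le C$ for all $s \ge s_0$, with $s_0$ depending on the (finite) initial value $\max_{M_0}(\varphi v^2)$ and on $\Lambda$. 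Since $\varphi \ge$ const $> 0$ on $D_{\alpha, R/2}$, this yields the stated bound on $v$ there.

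The main technical obstacle is bookkeeping the cutoff terms so that the coefficient of $v^4$ in group (i) is genuinely negative after combining with $-2(1-\delta)\varphi v^4$: one must check that $|\varphi'| \cdot \alpha^2 r$ and $|\varphi''| \cdot |\nabla r|^2$, both of which scale like $v^4$ (using $|\nabla r|^2 \lesssim r^2 v^2$), have total coefficient $< 2(1-\delta) \varphi$ at the maximum point. This is where the choice of $\alpha_0 < 2$ enters: for the cutoff $\varphi = (1 - r/R)^2$ one has $r\varphi'/\varphi, \, r^2\varphi''/\varphi$ bounded (uniformly in $R$, as $r/R \in [0,1]$), so the bad $v^4$ coefficient is $O(\alpha^2 + \text{(absorbing-constant)})$ which can be beaten by $2(1-\delta)$ for $\alpha$ small. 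A secondary subtlety, already handled by the hypothesis $M_s \subseteq \{t \ge T_0\}$, is that Lemmas \ref{r_evolve_ineq} and \ref{grad_r_size} only hold for $t \ge T(\alpha, \varepsilon)$; we simply set $T_0 = T_0(\alpha)$ to be the larger of the two thresholds with $\varepsilon$ fixed as a small multiple of what the absorption requires. Finally, one should note that the flow is only \emph{local in space} (defined on $\mathcal{N}_R$, not complete), but because $\varphi$ vanishes on $\partial D_{\alpha,R}$ and $D_{\alpha,R} \subseteq \{|x| \le R\}$ is compactly contained in $\mathcal{N}_R$ on each time slice (as $e^{\alpha t}|x|^2 \le R$ forces $|x|^2 \le R e^{-\alpha t} \le R$), the spatial maximum of $\varphi v^2$ is attained in the interior and the maximum principle applies without boundary contributions.
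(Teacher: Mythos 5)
Your overall strategy (maximum principle on $\varphi(r)\,v^2$, the surplus $\delta|\nabla v|^2$ in \eqref{v_evolve_ineq} to absorb the cross term, Lemmas \ref{r_evolve_ineq} and \ref{grad_r_size} for the cutoff, and the observation that $\varphi$ vanishes on $\partial D_{\alpha,R}$ so no boundary terms arise) is the same as the paper's. But there is a genuine gap exactly at the point you flag as ``the main technical obstacle,'' and your resolution of it does not work. First, for $\varphi=(1-r/R)^2$ the ratios $r\varphi'/\varphi$ and $r^2\varphi''/\varphi$ are \emph{not} bounded as $r\to R$ (e.g. $r^2\varphi''/\varphi=\frac{2r^2/R^2}{(1-r/R)^2}\to\infty$), so that claim is false. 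Second, and more fundamentally: after Cauchy--Schwarz the error terms you produce (whether $\frac{(\varphi')^2}{\varphi}|\nabla r|^2v^2\sim\alpha^2v^4$ for the quadratic cutoff, or the analogous terms at a spatial maximum) carry \emph{no} factor of $\varphi$, whereas the good term $-2(1-\delta)\varphi v^4$ does. Writing $v^4=(\varphi v^2)^2/\varphi^2$, the bad terms are $O(\alpha^2)(\varphi v^2)^2/\varphi^2$ against a good term $-c(\varphi v^2)^2/\varphi$; near the boundary of the support, where $\varphi$ is small, the bad terms win for any fixed $\alpha>0$, so the claimed ODE $\frac{d}{ds}\max(\varphi v^2)\le -c'(\max\varphi v^2)^2+C$ does not follow. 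The maximum of $\varphi v^2$ could sit at a point with $\varphi$ tiny and $v$ huge, where your differential inequality gives no sign information.

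The paper's proof closes this gap by three devices you do not use. (i) It takes $\mu=(R-r)^p$ with $p$ \emph{large}, chosen so that $\frac{4p^2}{4+\delta}-p(p-1)\le-\frac{p^2}{20}$ (see \eqref{p_size}): the convexity term $-\mu''|\nabla r|^2v^2$ then strictly dominates the Cauchy--Schwarz cost, leaving a genuinely \emph{negative} term $-\frac{p^2}{20}\frac{|\nabla r|^2}{(R-r)^2}\mu v^2$. A quadratic cutoff can never achieve this, since $\frac{16}{4+\delta}>2$ for all $\delta<4$. (ii) It then invokes the \emph{lower} bound of Lemma \ref{grad_r_size} (you only use the upper bound) to convert that leftover into $-c\,p^2\alpha^2\frac{r^2}{(R-r)^2}\mu v^4$, a good term that blows up like $(R-r)^{-2}$ near the boundary and hence beats the cutoff's own bad term $\frac{\alpha^2pr}{R-r}\mu v^4$, which blows up only like $(R-r)^{-1}$. (iii) The comparison is completed by a two-regime analysis ($r\le R-1$ versus $r>R-1$) and a completion of squares in the variable $\frac{\alpha r}{R-r}$, which is where the constraints $\alpha<\alpha_0=\frac{1}{10p}$ and $R\ge R_0(\alpha)=\frac{5}{2\alpha}+1$ enter; your sketch never explains why $R$ must be large depending on $\alpha$, which is a structural feature of the statement. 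Without (i)--(iii) the argument fails in the annulus where the cutoff degenerates, so the proposal as written does not prove the theorem.
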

\begin{proof}
Let $\alpha\in (0,2)$, to be fixed later, and let $r=r_{\alpha}$. Take $p>1$, to be fixed later, and consider the function $\mu(r)=(R-r)^p$. Note that whenever $r<R$ we have
\begin{equation}
\mu'\leq 0,\qquad \mu'(r)(R-r)=-p\mu,\qquad \mu''(r)(R-r)^2=p(p-1)\mu.
\end{equation}
Given $\varepsilon>0$, applying Lemma \ref{r_evolve_ineq} we see that 
\begin{equation}
(\partial_s-\Delta)\mu(r) \leq  (\varepsilon+\alpha^2 r)\frac{p}{R-r}\mu(r)v^2 -\frac{p(p-1)}{(R-r)^2}\mu(r)|\nabla r|^2,
\end{equation}
provided $T_0(\varepsilon)$ is large enough. Combining this with \eqref{v_evolve_ineq} we get
\begin{align}\label{first_evol_local}
(\partial_s-\Delta)&(v^2\mu(r)) \leq -(4+\delta)\mu(r)|\nabla v|^2-2(1-\delta)\mu(r)v^4+\mu (2\Lambda^2v^2+4\Lambda v) \nonumber \\
 &+(\varepsilon+\alpha^2 r)\frac{p}{R-r}\mu(r)v^4-\frac{p(p-1)}{(R-r)^2}\mu v^2|\nabla r|^2+4v\mu\frac{p}{R-r} \nabla v \cdot \nabla r. 
\end{align}
Using the absorbing inequality, we can estimate
\begin{equation}
\left|4v\frac{p}{R-r} \nabla v\cdot \nabla r\right| \leq (4+\delta)|\nabla v|^2+ \frac{4p^2}{(4+\delta)(R-r)^2}|\nabla r|^2v^2,
\end{equation}
so plugging this into \eqref{first_evol_local} we obtain
\begin{align}\label{second_evol_local}
(\partial_s-\Delta)(v^2\mu(r)) \leq v^2\mu\Big(&-2(1-\delta)v^2+ (2\Lambda^2+4\Lambda)+\frac{p(\varepsilon+\alpha^2 r)}{R-r}v^2 \nonumber\\
&+\left(\frac{4p^2}{4+\delta}-p(p-1)\right)\frac{|\nabla r|^2}{(R-r)^2}\Big).
\end{align}
{Setting} $\delta=1/3$, let us {choose} $p$ sufficiently large so that we have 
\begin{equation}\label{p_size}
\frac{4p^2}{4+\delta}-p(p-1) \leq -\frac{p^2}{20},
\end{equation}
and assume $\varepsilon$ is such that { $p\varepsilon<\frac{1}{4}$} (we will later further restrict  $\varepsilon$). Substituting this choice of parameters into  \eqref{second_evol_local}, and using Lemma \ref{grad_r_size} yields  
\begin{align}\label{second_evol_local2}
(\partial_s-\Delta)(v^2\mu(r)) \leq v^2\mu\Big(&-\frac{4}{3}v^2+ (2\Lambda^2+4\Lambda)+\frac{1+4\alpha^2pr}{4(R-r)}v^2\nonumber \\
&-\frac{p^2}{20(R-r)^2}\left((1-\varepsilon)\alpha^2r^2(v^2-1)-\varepsilon rv^2\right) \Big).
\end{align}

As we are eventually interested in a bound for $v$, from this point onward we are going to restrict to the case that $v$ is large. Indeed, if $v\geq v_0(\Lambda)$ for $v_0\geq 2$ large enough, we have
\begin{equation}\label{curv_cont_bound}
(2\Lambda^2+4\Lambda) <\frac{v^2}{6}, \qquad  (1-\varepsilon)(v^2-1) \geq \frac{4}{5}v^2,
\end{equation}
which, plugged into \eqref{second_evol_local2} yields
\begin{align}\label{third_evol_local}
(\partial_s-\Delta)(v^2\mu(r)) \leq v^4\mu\Big(-\frac{7}{6}+\frac{1+4\alpha^2pr}{4(R-r)}-\frac{p^2}{20(R-r)^2}\left(\frac{4}{5}\alpha^2r^2-\varepsilon r\right) \Big).
\end{align}
Assuming $R\geq 2$ and taking $\varepsilon=\varepsilon(\alpha)$ { sufficiently small} we can estimate
\begin{equation}\label{err_cont_bound}
\frac{p^2\varepsilon r }{20(R-r)^2} \leq \frac{r^2\alpha^2}{100(R-r)^2}+{\frac{1}{12}}.
\end{equation} 
Taking $\alpha_0=\frac{1}{10p}$ and $\alpha<\alpha_0$ (and corresponding $\varepsilon(\alpha)$ and corresponding $T_0(\varepsilon)=T_0(\alpha)$) and plugging \eqref{err_cont_bound} into \eqref{third_evol_local} we finally obtain that whenever $v\geq v_0$, we have the differential, inequality  
\begin{equation}\label{basic_ineq}
(\partial_s-\Delta)(v^2\mu(r)) < v^4\mu\left({-\frac{1}{12}}-1+\frac{1}{4(R-r)}+\frac{\alpha r}{10(R-r)}-\frac{\alpha^2r^2}{40(R-r)^2}\right).
\end{equation}

Our aim is to show that there exists some $R_0(\alpha)\geq 2$ such that \eqref{basic_ineq} can not hold, provided $R\geq  R_0(\alpha)$. To that end, we will analyze  \eqref{basic_ineq} in two regimes: when $r\in [0,R-1]$ and when $r\in (R-1,R)$.   First,  if $r\in [0,R-1]$ then  $\frac{1}{4(R-r)}\leq \frac{1}{4}$ and so \eqref{basic_ineq} implies
\begin{equation}\label{first_case_max}
(\partial_s-\Delta)(v^2\mu(r)) < {-\frac{v^4\mu}{12}} +v^4\mu\left(-\frac{3}{4}+\frac{\alpha\sqrt{3} r}{\sqrt{40}(R-r)}-\frac{\alpha^2r^2}{40(R-r)^2}\right)\leq {-\frac{v^2\mu}{12}}.
\end{equation}
{Alternatively, if} $r\in (R-1,R)$, note that if $R\geq \frac{5}{2\alpha}+1=R_0(\alpha)$ then 
\[
\frac{1}{4(R-r)} \leq \frac{\alpha r}{10(R-r)}
\]
and so \eqref{basic_ineq} implies  
\begin{equation}
(\partial_s-\Delta)(v^2\mu(r)) < {-\frac{v^4\mu}{12}}+ v^4\mu\left(-1+\frac{2\alpha r}{10 (R-r)}-\frac{\alpha^2r^2}{{100}(R-r)^2}\right)<{-\frac{v^2\mu}{12}}.
\end{equation}

We have thus shown that if ${s \geq 0}$ and if {$z\in M_{s}$} is a point with $v\geq v_0$, and if $\alpha<\alpha_0$, $R\geq R_0(\alpha)$,  $t_0 \geq T_0(\alpha)$  and $r<R$ then
\begin{equation}\label{derive}
(\partial_s-\Delta)(v^2\mu(r))(z,s) < { -\frac{\mu v^2}{12}}.
\end{equation}
Suppose that  $s\geq 0$, and that $z_{\ast}$ maximizes $\mu v^2(\cdot,s)$ over the domain
\begin{equation}
D_{\alpha,R}:=\{(x,t)\;|\; e^{\alpha t}|x|^2\leq R\},
\end{equation}
and assume  
\[
\mu v^2(z_{\ast},s) \geq R^pv_0^2.
\]
Since $\mu\leq R^p$, the above condition implies  $v(z_{\ast},s)\geq v_0$, so by spatial maximality and \eqref{derive} we have
\begin{equation}\label{diffeqsome}
\frac{d}{ds}\left(\mu v^2(z_{\ast},s)\right) \leq -\frac{\mu v^2{(z_{\ast},s)}}{12} \leq -\frac{R^pv_0^2}{12}. 
\end{equation}
{Denoting  
\begin{equation}
h(s):=\max_{z \in D_{\alpha,R}} \mu v^2(z,s), 
\end{equation}
and integrating \eqref{diffeqsome} for as long as $h(s)\geq R^pv_0^2$,} we get that for every $s\geq 0$ we have
\begin{equation}
h(s) \leq \max\left\{ R^pv_0^2,h(0)-\frac{R^pv_0^2}{12}s\right\}. 
\end{equation}
Therefore, for $s\geq s_0(M_0,\Lambda) $ we must have 
\begin{equation}
\sup_{z\in D_{\alpha,R}\cap M_s}\mu v^2 \leq R^p v_0^2.
\end{equation}  
In particular,
\begin{equation}
v \leq 2^{p/2}v_0
\end{equation} 
on $D_{\alpha,R/2}$ for $s\geq s_0$. 

\end{proof}

\bigskip

As was mentioned in the introduction, as opposed to the Euclidean and Minkowski case, our flatness estimate is also in the form of a gradient estimate.

\begin{theorem}[Flatness estimate]\label{flat_est}
For every $C<\infty$ and { $\theta\in (0,1)$}  there exist  $T_1=T_1(C,\theta)<\infty$, $s_1=s_1(C,\theta)<\infty$, and $\alpha=\alpha(C,\theta){\in (0,1)}$ with the following significance. Suppose that $(M_s)_{s\in (0,\infty)}$ is a mean convex mean curvature flow of graphs in $D_{\alpha,1}$ such that $M_s$ is contained in $t\geq T_1$ and  such that  
\begin{equation}\label{vA_bd}
v \leq C.
\end{equation}
Then for every every $s\geq s_1$ 
\begin{equation}
\sup_{z\in M_s\cap D_{\alpha,\theta}} v(z) \leq 1+\theta. 
\end{equation}
\end{theorem}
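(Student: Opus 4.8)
The plan is to run a localized maximum–principle argument for $v^2$ very much in the spirit of the proof of Theorem \ref{grad_est}, but now extracting a \emph{quantitative} bound $v\le 1+\theta$ rather than merely a uniform bound, which forces us to use the sharper evolution inequality \eqref{v_eq_for_ambilic} instead of \eqref{v_evolve_ineq}. The key point is that \eqref{v_eq_for_ambilic} has the \emph{reactive} structure $(\partial_s-\Delta)v^2\le -4|\nabla v|^2-2(v^2-1)$: away from the cutoff, the quantity $v^2-1$ is pushed to zero at an exponential rate. So the mechanism is completely different from Theorem \ref{grad_est} — there the quartic term $-v^4$ only bounds $v$, here the linear term $-2(v^2-1)$ actually drives $v\to 1$.

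First I would fix the cutoff $\mu(r)=(1-r)^p$ on the domain $D_{\alpha,1}$ with $r=r_\alpha$, exactly as before, and compute $(\partial_s-\Delta)(v^2\mu(r))$ by combining \eqref{v_eq_for_ambilic}, Lemma \ref{r_evolve_ineq} and Lemma \ref{grad_r_size}, using the absorbing inequality on the cross term $4v\mu\frac{p}{1-r}\nabla v\cdot\nabla r$ against the $-4|\nabla v|^2$ term. Since $v\le C$ is assumed, all the $v^2$-weighted error terms coming from the cutoff ($\varepsilon v^2$, $\alpha^2 r v^2$, the $\frac{|\nabla r|^2}{(1-r)^2}v^2$ term) can be made as small as we like \emph{relative to} the fixed bound $C$ by taking $\alpha$ small and then $\varepsilon=\varepsilon(\alpha)$ small and $T_1$ large; crucially the gradient-of-$r$ upper bound in Lemma \ref{grad_r_size} is $|\nabla r|^2\le 2\alpha^2 r^2(v^2-1)+\varepsilon r v^2$, which is itself $O(\alpha^2)+o(1)$ small here. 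The upshot should be a differential inequality of the form
\begin{equation}\label{flat_key_ineq}
(\partial_s-\Delta)\big(\mu(r)(v^2-1)\big)\le -(2-\eta)\,\mu(r)(v^2-1)+\eta
\end{equation}
on $D_{\alpha,1}$, valid once $t\ge T_1$, where $\eta=\eta(\alpha,\varepsilon,C)$ can be made arbitrarily small; here one also uses that $\mu$ and $\mu'/\mu\cdot$(cutoff errors) are controlled on the smaller ball and that $(v^2-1)$ itself is bounded by $C^2-1$, so products of small errors with $(v^2-1)$ are absorbed. (One should replace $v^2\mu$ by $(v^2-1)\mu$ throughout so that the reaction term is genuinely dissipative; the difference $\mu$ satisfies its own inequality coming from $-\alpha^2 r\,v^2$ which is again small.)

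Then I would run the maximum principle on $w:=\mu(r)(v^2-1)$ over $D_{\alpha,1}\cap M_s$. At an interior spatial maximum $z_\ast$ with $w(z_\ast,s)$ large we get from \eqref{flat_key_ineq} that $\frac{d}{ds}w(z_\ast,s)\le -(2-\eta)w(z_\ast,s)+\eta$, while $w$ vanishes on $\partial D_{\alpha,1}$ by the cutoff and stays bounded; a standard Hamilton-type trick (the function $h(s)=\max w(\cdot,s)$ is Lipschitz and satisfies the same ODE inequality a.e.) gives $\limsup_{s\to\infty}h(s)\le \frac{\eta}{2-\eta}$, with convergence at rate $e^{-(2-\eta)s}$, so there is $s_1=s_1(C,\theta)$ with $h(s)\le\frac{\eta}{2-\eta}$ for $s\ge s_1$ (the dependence on $M_0$ disappears because the reaction term makes the bound eventually independent of initial data, using $w(\cdot,0)\le C^2-1$ on a fixed-volume region). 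On the sub-ball $D_{\alpha,\theta}$ we have $\mu(r)=(1-r)^p\ge(1-\theta)^p$, hence $v^2-1\le (1-\theta)^{-p}\frac{\eta}{2-\eta}$ there; choosing $\alpha$ (and thereafter $\varepsilon,T_1$) small enough that $(1-\theta)^{-p}\frac{\eta}{2-\eta}\le (1+\theta)^2-1$ gives $v\le 1+\theta$ on $D_{\alpha,\theta}\cap M_s$ for $s\ge s_1$, as claimed. The main obstacle, and the step to be careful about, is making the chain of choices $\theta\rightsquigarrow p \rightsquigarrow \alpha\rightsquigarrow \varepsilon\rightsquigarrow T_1$ genuinely consistent: $p$ must be chosen first (large, depending only on $\theta$, to win the $|\nabla r|^2$ absorption as in \eqref{p_size}), but $p$ then appears in the error terms that $\alpha,\varepsilon$ must dominate — one has to verify the errors are still $o(1)$ in $\alpha$ with $p$ fixed, and that the power $(1-\theta)^{-p}$ loss on the sub-ball is beaten by the smallness of $\eta$, which it is since $\eta\to 0$ as $\alpha\to 0$ with $p$ held fixed. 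A secondary subtlety is justifying the maximum-principle step with a noncompact (though proper) cutoff domain on the moving surfaces $M_s$, which is handled exactly as in Theorem \ref{grad_est} since $\mu(r)(v^2-1)$ is compactly supported in space on each $M_s$.
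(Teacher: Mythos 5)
Your proposal is correct and follows essentially the same route as the paper: the paper also localizes $v^2-1$ with a cutoff in $r$ (it uses $\phi(r)=(1-r^2)^2$ rather than $(1-r)^p$, which sidesteps your parameter-chain worry about $p$), absorbs the cross term into $-4\phi|\nabla v|^2$, controls the cutoff errors via $v\le C$, Lemmas \ref{r_evolve_ineq} and \ref{grad_r_size}, small $\alpha$ and large $T_1$, and arrives at $(\partial_s-\Delta)h\le -2h+\text{(small constant)}$ before concluding by the maximum principle on the sub-ball where the cutoff is bounded below. The only differences are cosmetic choices of cutoff and of the ODE comparison.
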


\begin{proof}
Let $\alpha\in (0,1)$, to be fixed later, and let $r=r_{\alpha}$. Consider the function $\phi(r)=(1-r^2)^2$. Letting $\eta>0$ { to be determined later}, we note that  Lemma \ref{r_evolve_ineq} and Lemma \ref{grad_r_size} combined with \eqref{vA_bd} imply that if $\alpha$ is small enough and if $T_1$ is large enough
\begin{equation}\label{small_heat}
(\partial_s-\Delta)\phi(r) \leq  \eta.
\end{equation}
{
Setting 
\[
h=(v^2-1)\phi,
\]
combining \eqref{small_heat}  with \eqref{v_eq_for_ambilic} }we get
\begin{align}\label{first_evol_local2}
(\partial_s-\Delta)h &\leq -4\phi(r)|\nabla v|^2-2\phi(r) (v^2-1)+C^2\eta-4v\phi' \nabla v\cdot \nabla r^2 \nonumber\\
&\leq -2\phi(r) (v^2-1)+C^2\eta+\frac{4v^2(\phi')^2}{\phi}r^2|\nabla r|^2\\
& \leq -2h+10C^2 \eta\nonumber, 
\end{align}
where in the last inequality we have used the upper bound in Lemma \ref{grad_r_size}, the gradient bound \eqref{vA_bd} and have taken $\alpha=\alpha(\eta)$  sufficiently small. Taking $\eta$ so small that $10C^2\eta<\frac{\theta}{4}$ we therefore get that
\begin{equation}
(\partial_s-\Delta)h \leq -2h+\frac{\theta}{4}.
\end{equation}
Thus, if the supremum of $h$ at time $s$ is larger than $\frac{\theta}{4}$ then  at time $s$ 
\begin{equation}
\frac{d}{ds}\max h(\cdot,s) \leq -\theta/4. 
\end{equation}
Taking $s_1\geq \frac{4C}{\theta}$ we get that for every $s\geq s_1$
\begin{equation}
h \leq \theta/4. 
\end{equation}
In particular, if $r\leq \theta$ and $s \geq s_1$ then 
\begin{equation}
(1-\theta^2)^2(v^2-1) \leq \frac{\theta}{4}
\end{equation}
so
\begin{equation}
v \leq \sqrt{1+ \frac{\theta}{2(1-\theta^2)^2}} \leq 1+\theta.  
\end{equation}

\end{proof}

\section{barriers}

All the theorems thus far had assumed that $t\geq T$ along the flow. The following barrier argument provides the mechanism that ensures this condition.
\begin{proposition}[Lower Barrier]\label{barrier_proposition}
There exists $R_2<\infty$ with the following significance: consider the MCF with boundary $\Sigma_s \subseteq \mathcal{N}_{R_2}$ such that  
\[
\Sigma_0=\{(x,0)\;:\; |x|\leq R_2\}
\] 
and such that 
\[
\Sigma_s\cap \{(x,t)\;:\; |x|= R_2\}= \{(x,0)\;:\; |x|=R_2\}.
\]
Then $\Sigma_s$ exists for every $s\geq 0$ and writing $\Sigma_s$ as a graph of a function $w(\cdot,s)$ we have   
\begin{equation}
\lim_{s\rightarrow \infty} w(0,s)=\infty.
\end{equation} 
\end{proposition}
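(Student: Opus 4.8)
The plan is to construct an explicit rotationally symmetric lower barrier and compare it to $\Sigma_s$ using the avoidance principle for mean curvature flow with the prescribed boundary conditions. Concretely, I would look for a barrier of the form of a spacelike graph $t = \psi(|x|, s)$ which is itself a subsolution (i.e. moves slower upward than an actual MCF would), agrees with $\Sigma_0$ at $s=0$ up to being below it, and keeps its boundary pinned on $\{|x| = R_2, t = 0\}$. A natural ansatz, given the structure of the de Sitter metric \eqref{ds_metric} and the fact that the flat slices $\bar M_s = t^{-1}(3s)$ from \eqref{barms_def} are the model CMF solutions, is a slightly ``bent down'' version of these slices: something like $\psi(\rho, s) = f(s) - g(s)(R_2^2 - \rho^2)$ or a similar profile that is a small perturbation of a spatially constant function, tuned so that its mean curvature (which, for a downward-bent spacelike cap, is positive with respect to the future normal) dominates the actual speed, making it a genuine subsolution. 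One then needs $f(s) \to \infty$; since the unperturbed model has $t = 3s$, one expects $f(s)$ to grow at least linearly.

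The key steps, in order: (1) Set up the graph equation for MCF in $\mathcal{N}_{R_2}$ in rotationally symmetric form, i.e. derive the PDE satisfied by $w(\rho, s)$ where $\rho = |x|$, using the expressions for $H$, $v$ and $\Delta t$ recorded in the preliminaries (in particular \eqref{delta_teq}, which gives $\partial_s w = Hv = \Delta t + 3 + (v^2-1)$ along the flow). (2) Posit the explicit barrier profile $\psi(\rho,s)$ with the pinned boundary $\psi(R_2, s) = 0$ and $\psi(\rho, 0) \le 0$ for $|\rho|\le R_2$, and verify it is spacelike and a subsolution of this graph PDE for a suitable universal choice of $R_2$ — this is where the exponential warping $e^{2t}$ in the metric helps, since the spatial curvature term producing $H$ is weighted by $e^{-2t}$ and thus becomes negligible as $t$ grows, so a crude profile suffices. (3) Invoke the avoidance/comparison principle for graphical MCF with boundary: since $\Sigma_s$ and the barrier agree (or are correctly ordered) on the parabolic boundary $\{s = 0\} \cup \{|x| = R_2\}$ and the barrier is a subsolution, conclude $w(x,s) \ge \psi(|x|,s)$ for all $s \ge 0$, and in particular $w(0,s) \ge \psi(0,s) \to \infty$. (4) Separately record long-time existence of $\Sigma_s$: as long as the flow stays graphical and spacelike with the barrier forcing it upward, the relevant quantities ($v$, $H$) can be controlled — e.g. $v$ via the maximum principle applied to \eqref{v_eq_for_ambilic} using the fixed boundary and a barrier from above near the boundary — so no singularity forms and the flow is immortal.

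I expect the main obstacle to be step (2) together with the boundary bookkeeping in step (3): one must exhibit a profile that is \emph{simultaneously} spacelike everywhere on $[0, R_2]$, has the correct boundary behavior, is a subsolution for \emph{all} $s \ge 0$ (not just initially), and has $\psi(0,s) \to \infty$ — and these pull in slightly different directions, since making the cap bend down enough to guarantee the subsolution inequality near the boundary competes with keeping it spacelike and with the boundary being pinned at a fixed height. The likely resolution is to exploit that the $e^{-2t}$ weighting makes the obstruction to the subsolution property decay, so that after choosing $R_2$ large (to get the initial mean-convexity/ordering right) one only needs the barrier to be a subsolution for, say, the first unit of time, after which the flow is already at large $t \ge T$ and one can patch in the cleaner estimates (or simply use that $\partial_s w \ge 3 - (v^2-1)\cdot(\text{something small})$ once $t$ is large, since $\Delta t \le 0$ at a spatial interior maximum of $t$ on $\Sigma_s$). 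A secondary subtlety is justifying the comparison principle in the non-compact-in-time, boundary setting, but this is standard once the barrier is genuinely strict and the boundary values are ordered.
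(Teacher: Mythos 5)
Your overall strategy (comparison with an explicit lower barrier) is not the one the paper uses, and as written it has a genuine gap at exactly the step you flag as the main obstacle: step (2), the construction of a single spacelike subsolution $\psi(\rho,s)$ that is pinned at $t=0$ on $|x|=R_2$ for \emph{all} $s$ while $\psi(0,s)\to\infty$. Any such profile must become nearly null in a fixed annulus near the boundary: a future-directed radial null curve from $(\rho,t)=(R_2,0)$ satisfies $\rho(t)=R_2-(1-e^{-t})$, so the transition from height $0$ to height $N$ must occur within an annulus of Euclidean width $1-e^{-N}<1$, forcing $|D\psi|$ to approach the null bound $e^{\psi}$ there (equivalently $v\to\infty$). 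Your quadratic ansatz $\psi=f(s)-g(s)(R_2^2-\rho^2)$ fails outright (with the boundary pinned it forces $f\equiv 0$, and then $|D\psi|=2|g|\rho$ violates spacelikeness near $\rho=R_2$ once $|g|$ is large), and your proposed rescue --- that the subsolution property is only needed for the first unit of time because afterwards the flow sits at large $t$ --- is false precisely in the region that matters: near the pinned boundary the flow never leaves $t=0$, so the $e^{-2t}$ smallness you want to invoke is unavailable there, and verifying the subsolution inequality for a nearly-null profile (where $H$ and $v$ are both large and compete) is the entire difficulty, not a perturbative afterthought. Without this, you get no mechanism forcing $w(0,s)\to\infty$; a barrier valid only for finite time yields only a finite height gain.

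The paper avoids constructing any global-in-time barrier by an iteration using the de Sitter isometry $O_a(x,t)=(e^a x,t-a)$ of \eqref{O_a_def}. One first gets a definite height gain $c>0$ after unit time on the slightly smaller ball $|x|\le R_2-1$ (by strict mean convexity of the initial disc plus comparison with translated copies of the unit-radius flow), and then observes that if $e^{c}(R_2-1)\ge R_2$ the isometry $O_c$ carries the time-shifted flow into a configuration lying above the original one in $\mathcal{N}_{R_2}$; the maximum principle then gives $w(x,1+s)\ge w(e^c x,s)+c$, and iterating yields $w(0,j)\ge jc\to\infty$. This ``gain a little, then renormalize by the isometry'' step is the missing ingredient in your argument; it converts a finite-time, finite-gain comparison into unbounded growth without ever exhibiting an explicit immortal subsolution. (Your steps (1), (3), (4) --- the graphical PDE via \eqref{delta_teq}, the comparison principle with ordered parabolic boundary data, and long-time existence from the two-sided barriers $0\le w\le 3s$ furnished by the slices \eqref{barms_def} --- are all consistent with what the paper does.)
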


\begin{proof}
First, observe that for every $\varepsilon>0$ the hypersurfaces $\bar{M}_{\varepsilon+s}$ form upper barriers to the flow $\Sigma_s$. In particular, for every $s$ we have 
\begin{equation}
0 \leq \inf_{|x| \leq R_2}w(x,s)<\sup_{|x|\leq R_2}w(x,s) \leq 3s.
\end{equation}
By standard parabolic theory, this implies that $\Sigma_s$ exists for all time. 
Now, denote by $\sigma_s$ the MCF evolution with boundary 
\[
\sigma_{ 0}=\{(x,0)\;:\; |x|\leq 1\}, \qquad \sigma_s\cap \{(x,t)\;:\; |x|= 1\}= \{(x,0)\;:\; |x|=1\},
\]
so that $\sigma_s$ coincides with $\Sigma_s$ if $R_2$ were one. By the same token as above $\sigma_s$ exists for every $s\in [0,\infty)$. Moreover,  since $\sigma_0$ is strictly mean convex, { and as the maximum principle implies that $\Sigma_s$ lies above $\sigma_s$ where both are defined,}  we have that {
\begin{equation}
w(0,s)\geq w(0,1)=c>0, 
\end{equation}
for every $s\geq 1$.
}
Using translates of $\sigma_s$ as lower barriers to $\Sigma_s$ we therefore obtain that
\begin{equation}\label{ux1low}
w(x,1)\geq c\qquad \textrm{if}\qquad|x|\leq R_2-1. 
\end{equation}
Recalling the isometries
\begin{equation}\label{O_a_recall}
O_{a}(x,t):=(e^a x,t-a),
\end{equation}
if $R_2$ is such that { $e^{c}(R_2-1)\geq R_2$}   then by virtue of \eqref{ux1low}, the evolution
\begin{equation}
\Sigma^1_s{:}=O_c(\Sigma_{1+s})\cap \mathcal{N}_{R_2}
\end{equation}
is a mean convex graphical evolution in $\mathcal{N}_{R_2}$, which is contained in the region where $t\geq 0$. In particular $\Sigma^1_s$ lies above $\Sigma_s$ or, put differently,  
\begin{equation}\label{a_little_up}
w(x,1+s) \geq w(e^{c}x,s)+c,
\end{equation}
if { $|x| \leq e^{-c}(R_2-1)$ } and $s\geq 0$.

Using \eqref{a_little_up} iteratively, we get that for every $j\in \mathbb{N}$
\begin{equation}
w(x,j+s) \geq w(e^{jc}x,s)+jc,
\end{equation}
if { $|x| \leq e^{-jc}(R_2-1)$} and $s\geq 0$, from which the result follows.       
\end{proof}

\section{proof of the main theorem}
We are now in the position to put everything together and conclude the proof of the main theorem from the introduction.
\begin{proof}[Proof of Theorem \ref{main_thm}]
{The proof consists of the following steps. We first employ Proposition \ref{barrier_proposition} to show that the flow reaches infinite height on a smaller de Sitter cylinder. We then use Theorem \ref{grad_est} (gradient estimate) to obtain some uniform gradient bounds. These bounds in turn are used as an input for Theorem \ref{flat_est} (flatness estimate), which are then integrated to obtain local derivative estimates. Once such estimates are obtained, the proof is concluded via standard parabolic estimates.}     

\bigskip

{For the first step}, observe that Proposition \ref{barrier_proposition} (lower barrier) implies that if { $R\geq \max\{2R_2,2\}$}, then setting $\psi(s)=w(0,s)$, we have 
\begin{equation}\label{psi_goes}
\psi(s)\nnearrow \infty.
\end{equation}
On the other hand, denoting
\begin{equation}
S_{x_0}(x,t)=(x+x_0,t),
\end{equation}
the assumption that $M_s$ lies in $t\geq 0$, { and our choice of $R\geq 2R_2$, imply} that $M_0$ lies above $S_{x_0}(\Sigma_0)$  for every $x_0$ with $|x_0| \leq {\frac{R}{2}}$, and { $M_s\cap S_{x_0}(\partial \mathcal{N}_{R_2})$ lies above $S_{x_0}\left(\partial \Sigma_s\right)$ for such an $x_0$ and every $s\geq 0$}. The maximum principle thus implies that for every $x$ with $|x|\leq R/2$ we have that 
\begin{equation}
\min_{z\in M_s\cap \mathcal{N}_{R/2}} t(z)\geq \psi(s).
\end{equation}
Renaming $R$ to be $R/2$ we may therefore assume w.l.g that
\begin{equation}\label{bd_up}
\min_{z\in M_s\cap \mathcal{N}_{R}} t(z)\geq \psi(s),
\end{equation}
{ and that $R\geq 1$.} 

{Having established \eqref{bd_up}, we turn to proving a uniform gradient estimate.} Denote 
\begin{equation}\label{Lambdadef}
\Lambda:=\sup_{s\geq 0}\;\sup_{z\in M_s}H(z)+{1 }
\end{equation}
which is finite, by assumption. {Let $\alpha_0$ and $C=C(\Lambda)$ be the constants from Theorem \ref{grad_est} (gradient estimate), let $\alpha<\alpha_0$ and consider the constants $T_0(\alpha)$ and $R_0(\alpha)$ from that same theorem. Since for every $a$
\begin{equation}
O_{a}\left(\mathcal{N}_R\right)=\mathcal{N}_{e^{a}R},
\end{equation}
{and as $O_a$ is an isometry}, using \eqref{bd_up} and \eqref{psi_goes} we see that there exists $a(\alpha)$ and $s_{\ast}(\alpha)$ such that the flow
\begin{equation}
\tilde{M}_s^{\alpha}:=O_{a(\alpha)}(M_{s_{\ast}(\alpha)+s})
\end{equation}
is a mean convex flow in $\mathcal{N}_{R(\alpha)}$ with $|H|\leq \Lambda$ satisfying $t(z)\geq T_0(\alpha)$ for every $s\geq 0$ and $z \in \tilde{M}_s^{\alpha}\cap D_{\alpha,R(\alpha)}$.  Applying Theorem { \ref{grad_est}} (gradient estimate) we therefore get that for $s\geq s_0(\alpha):=s_0(M_{s_{\ast}},\Lambda)$ we have
\begin{equation}
\sup_{z\in \tilde{M}_s^{\alpha}\cap D_{\alpha,R(\alpha)/{2}}} v\leq C. 
\end{equation} 
}

\bigskip

We are now going to improve the gradient estimates to flatness estimates. For that, consider $\theta>0$ and choose { $\alpha(C,\theta)$, $T_1(C,\theta)$ and $s_1(C,\theta)$}  as in Theorem \ref{flat_est} (flatness estimate). 
Applying \eqref{psi_goes} and \eqref{bd_up} once more, we get that there exists {$s_2=s_2(C,\theta)\geq s_0(\alpha(C,\theta))$} such that 
\begin{equation}
\min_{z\in \tilde{M}^{\alpha}_s\cap D_{\alpha({C},\theta),1}} t(z)\geq T_1({C},\theta),
\end{equation}
for every $s\geq s_2(C,\theta)$.  Therefore, Theorem \ref{flat_est} (flatness estimate) implies that
\begin{equation}\label{fin_grad_bd}
\sup_{z\in \tilde{M}^{\alpha}_s \cap D_{\alpha(C,\theta),\theta}} v \leq 1+\theta,
\end{equation}
for every $s\geq s_2(C,\theta)+s_1(C,\theta)$.  In terms of the original flow $M_s$ this implies that
\begin{equation}\label{fin_grad_bd_trans}
\sup_{z\in M_s \cap O_{-a(\alpha(C,\theta))}\left(D_{\alpha(C,\theta),\theta}\right)} v \leq 1+\theta,
\end{equation}
or every $s\geq s_3(C,\theta):=s_2(C,\theta)+s_1(C,\theta)+{s_{\ast}(\alpha(C,\theta))}$.

\bigskip

Let us now set
\begin{equation}
\varphi(\lambda):=u(0,\lambda) \geq \psi(\lambda),
\end{equation}
and 
\begin{equation}\label{Mslambda_def}
M_s^{\lambda}:=O_{\varphi(\lambda)}(M_{s+\lambda}),
\end{equation}
i.e we translate in time (using the isometry) so that $(0,0)\in M_0^{\lambda}$. {Note that $M_s^{\lambda}$ is the graph of the function $u^{\lambda}(\cdot,s)$  from the statement of the theorem.

In order to obtain the asserted convergence of the $u^{\lambda}$, we proceed in two steps. We first show that $M^{\lambda}_s$ is a graph with small gradient over larger and larger balls in the $x_i$ co-ordinates for larger and larger flow time intervals, \textit{provided} the height of that graph remains quantitatively  bounded. We then use that $(0,0)\in M_{0}^{\lambda}$ and the above conditional bounds to integrate the height on growing balls, supplying the assumption that is needed for the conditional result.}

For the first step, note that since $\alpha(C,\theta)<2$, for every $\rho<\infty$, there exists $h_0$ such that for every $h\geq h_0$
\begin{equation}\label{incl_dom_first}
E_{\rho}:=\{(x,t)\;|\; |x|\leq \rho,\; |t| \leq \rho \} \subseteq O_{h}\left(D_{\alpha(C,\theta),\theta}\right).
\end{equation}
In particular, as  $\varphi(\lambda)\nnearrow \infty$, for every $\theta\in (0,1)$ and $\rho=\rho(\theta)<\infty$ to be determined later, we have that
\begin{equation}\label{incl_dom}
E_{\rho} \subseteq O_{ \varphi(\lambda)-a(\alpha(C,\theta))}\left(D_{\alpha(C,\theta),\theta}\right), 
\end{equation}
provided $\lambda\geq \lambda_0(\theta,\rho)$.  Therefore, if $\lambda\geq \lambda_0(\theta,\rho)$ and if $\lambda-\log(\rho)\geq s_3({C},\theta)$ {then combining \eqref{fin_grad_bd_trans} and \eqref{incl_dom}} we get
\begin{equation}\label{fin_grad_bd_trans2}
\sup_{z\in M^{\lambda}_s \cap E_{\rho}} v \leq 1+\theta,
\end{equation}
for every $s\in [-\log(\rho),\log(\rho)]$, where we stress that $M_s^{\lambda}$ is defined by \eqref{Mslambda_def} (to avoid confusion with the auxiliary $\tilde{M}_s^{\alpha}$ which will not take a further role in this proof).  {Note that \eqref{fin_grad_bd_trans2} only gives gradient bounds at points where  $|u^{\lambda}| \leq \rho$, which is the  nature of the ``conditional'' gradient bounds of the first step which was described above.}

{For the second step, recall that by }construction $(0,0)\in M^{\lambda}_0$. Combining 
\[
\varphi'(s)=H(0,s)v(0,s),
\]
\eqref{fin_grad_bd_trans2}, \eqref{Lambdadef}, mean convexity and our choice of $\theta<1$ we get  that 
\begin{equation}\label{center_bd}
|u^{\lambda}(0,s)| \leq 2\Lambda s,\;\;\; \textrm{provided}\;\;\;s\in \left[\frac{-\log(\rho)}{8\Lambda}, \frac{\log(\rho)}{8\Lambda} \right].
\end{equation}
{We will use \eqref{center_bd} and \eqref{fin_grad_bd_trans2} to integrate the height bound, which will confirm that $|u^{\lambda}(\cdot,s)| \leq \rho$ on large balls in the $x$ domain. To that end,} taking any $\omega \in \mathbb{S}^2(1)$, { note that $y\mapsto (y\omega,u^{\lambda}(y\omega,s))$ is a curve in $M^{\lambda}_s$ with velocity 
\begin{equation}
\frac{d}{dy}(y\omega,u^{\lambda}(y\omega,s))=(\omega,\omega\cdot Du^{\lambda}(-,s)),
\end{equation}
{where $Du^{\lambda}$ is the Euclidean derivative of $u^{\lambda}(-,s)$.} Therefore
\begin{equation}\label{motion_bd_a}
\left|\frac{d}{dy}u^{\lambda}(y\omega,s)\right| \leq |\nabla u^{\lambda}(y\omega,s)|_g \left|(\omega, \omega \cdot Du^{\lambda})\right|_{g}.
\end{equation}
By the definition of the metric $g$, if $|u^{\lambda}(y\omega,s)| \leq \frac{\log(\rho)}{2}$ then 
\begin{equation}\label{motion_bd_b}
\left|(\omega, \omega \cdot Du^{\lambda}(y\omega,s))\right|_g \leq e^{u^{\lambda}(y\omega,s)}\leq \sqrt{\rho}.
\end{equation}
}
Additionally,  by virtue  of \eqref{fin_grad_bd_trans2} and \eqref{grad_sq}, if $y\leq \rho$ and $|u^{\lambda}(y\omega,s)| \leq \frac{\log(\rho)}{2}$ then  
\begin{equation}\label{motion_bd_c}
|\nabla u^{\lambda}(y\omega,s)|_g \leq \sqrt{v^2-1} \leq 2\sqrt{\theta}.
\end{equation}
Putting \eqref{motion_bd_a}, \eqref{motion_bd_b} and \eqref{motion_bd_c} together we therefore get that if $y\leq \rho$ and $|u^{\lambda}(y\omega,s)| \leq \frac{\log(\rho)}{2}$ then 
\begin{equation}\label{motion_bd}
\left|\frac{d}{dy}u^{\lambda}(y\omega,s)\right| \leq 2\sqrt{\theta}e^{u^{\lambda}(y\omega,s)} \leq 2\sqrt{\theta \rho},
\end{equation}
Therefore, defining 
\begin{equation}\label{rho_def}
\rho=\rho(\theta)=\theta^{-\frac{1}{3}},
\end{equation}
which blows up as $\theta\rightarrow 0$, then {using  \eqref{center_bd} and integrating  \eqref{motion_bd} for as long as it is valid, } we get that if $\lambda$ is sufficiently large, { and $\theta \leq \frac{1}{1000}$,} then
\begin{equation}
|u^{\lambda}(x,s)|\leq \frac{\log(\rho(\theta))}{2},\;\;\; \textrm{provided}\;\;\; |x|\leq \rho(\theta),\;\;s\in  \left[\frac{-\log(\rho(\theta))}{8\Lambda}, \frac{\log(\rho(\theta))}{8\Lambda} \right].
\end{equation}
Thus, for the above values of $s$ we have $M^{\lambda}_s\cap \mathcal{N}_{\rho}=M^{\lambda}_s \cap E_{\rho}$, so \eqref{fin_grad_bd_trans2} implies

\begin{equation}\label{fin_grad_bd_trans3}
\sup_{z\in M^{\lambda}_s \cap \mathcal{N}_{\theta^{-1/3}}} v \leq 1+\theta,\;\;s\in  \left[\frac{-\log(\theta^{-1/3})}{8\Lambda}, \frac{\log(\theta^{-1/3})}{8\Lambda} \right], \; \lambda\; \textrm{is sufficiently large}.
\end{equation}

\bigskip

Finally, using \eqref{fin_grad_bd_trans3} and \eqref{center_bd} and standard parabolic theory \cite{Lieberman}, we get uniform higher derivative estimates for $u^\lambda$ on compact subsets of space time. In particular, using Arzela Ascoli, \eqref{fin_grad_bd_trans3} and \eqref{center_bd} again, we get that
\begin{equation}
u^{\lambda}\xrightarrow{C^{\infty}_{\mathrm{loc}}(\mathbb{R}^3\times \mathbb{R})} f(s),
\end{equation}
for some smooth function $f:\mathbb{R}\rightarrow \mathbb{R}$. Since $f(0)=0$ by construction and since each slice $\{t=t_0\}$ in de Sitter space has constant mean curvature $3$, it follows that $f(s)=3s$. This concludes the proof of the main theorem. 
\end{proof}

\bigskip

\bibliography{deSitter}

\bibliographystyle{alpha}

\vspace{10mm}

{\sc Or Hershkovits, Institute of Mathematics, Hebrew University, Givat Ram, Jerusalem, 91904, Israel}\\

{\sc Leonardo Senatore, Department of Physics, ETH Zurich, , Switzerland}\\

\emph{E-mail:}  or.hershkovits@mail.huji.ac.il, lsenatore@ethz.ch

\end{document}